\def\R{\mathbb{R}}
\def\N{\mathbb{N}}
\def\Z{\mathbb{Z}}
\def\C{\mathbb{C}}
\def\F{\mathbb{F}}
\def\P{\mathcal{P}}
\def\f{\widehat{f}}
\renewcommand{\d}{\text{\rm d}}
\newcommand{\bxi}{{\bm \xi}}
\newcommand{\bb}{{\bm b}}
\newcommand{\mc}{\mathcal}
\newtheorem{theorem}{Theorem}
\newtheorem{conjecture}[theorem]{Conjecture}
\newtheorem*{definition*}{Definition}
\newtheorem{lemma}[theorem]{Lemma}
\DeclareFontFamily{U}{tipa}{}
\DeclareFontShape{U}{tipa}{m}{n}{<->tipa10}{}
\newcommand{\arc@char}{{\usefont{U}{tipa}{m}{n}\symbol{62}}}%
\numberwithin{equation}{section}
\newcommand{\intav}[1]{\mathchoice {\mathop{\vrule width 6pt height 3 pt depth  -2.5pt
\kern -8pt \intop}\nolimits_{\kern -6pt#1}} {\mathop{\vrule width
5pt height 3  pt depth -2.6pt \kern -6pt \intop}\nolimits_{#1}}
{\mathop{\vrule width 5pt height 3 pt depth -2.6pt \kern -6pt
\intop}\nolimits_{#1}} {\mathop{\vrule width 5pt height 3 pt depth
-2.6pt \kern -6pt \intop}\nolimits_{#1}}}
\newcommand{\intavl}[1]{\mathchoice {\mathop{\vrule width 6pt height 3 pt depth  -2.5pt
\kern -8pt \intop}\limits_{\kern -6pt#1}} {\mathop{\vrule width 5pt
height 3  pt depth -2.6pt \kern -6pt \intop}\nolimits_{#1}}
{\mathop{\vrule width 5pt height 3 pt depth -2.6pt \kern -6pt
\intop}\nolimits_{#1}} {\mathop{\vrule width 5pt height 3 pt depth
-2.6pt \kern -6pt \intop}\nolimits_{#1}}}
    \newcommand{\x}{{\bm x}}
     \newcommand{\n}{{\bm n}}
       \newcommand{\p}{{\bm p}}
    \renewcommand{\bxi}{{\bm \xi}}  
     \newcommand{\bzeta}{{\bm \zeta}}    
      \newcommand{\bell}{{\bm \ell}}
\title[Sharp  extension  for the moment curve on finite fields]{Sharp endpoint extension inequalities \\ for the moment curve on finite fields}
\author[Biswas]{Chandan Biswas}
\address{Department of Mathematics, Indian Institute of Technology Bombay, Mumbai 400076, India}
\email{cbiswas@iitb.ac.in}
\author[Carneiro]{Emanuel Carneiro}
\address{ICTP - The Abdus Salam International Centre for Theoretical Physics, 
Strada Costiera, 11, I - 34151, Trieste, Italy}
\email{carneiro@ictp.it}
\author[Flock]{Taryn C. Flock}
\address{Macalester College\\Mathematics, Statistics, and Computer Science\\
Olin-Rice Science Center, Room 222\\
Saint Paul, MN 55105-1899\\USA}
\email{tflock@macalester.edu}
\author[Oliveira e Silva]{Diogo Oliveira e Silva}
\address{ 
Center for Mathematical Analysis, Geometry and Dynamical Systems \&
Departamento de Matem\'{a}tica\\ 
Instituto Superior T\'{e}cnico\\
Av.\@ Rovisco Pais\\ 
1049-001 Lisboa, Portugal} 
\email{diogo.oliveira.e.silva@tecnico.ulisboa.pt}
\author[Stovall]{Betsy Stovall}
\address{University of Wisconsin--Madison\\Department of Mathematics\\
480 Lincoln Drive\\ 
Madison, WI 53706\\
USA}
\email{stovall@math.wisc.edu}
\author[Tautges]{James Tautges}
\address{University of Wisconsin--Madison\\Department of Mathematics\\
480 Lincoln Drive\\ 
Madison, WI 53706\\
USA}
\email{tautges2@wisc.edu }
\begin{document}

\subjclass[2020]{42B10, 12E20, 05C70, 05B25, 26D15, 05E05}
\keywords{Sharp restriction theory, finite fields, moment curve, maximizers, Muirhead's inequality, weighted graphs, fractional matching.}
\begin{abstract}
We investigate the sharp endpoint extension inequality for the moment curve in finite fields. We determine  the optimal  constant and characterize the  maximizers in two complementary regimes: (i) low dimensions $d\leq 20$; (ii) large field cardinality $q\geq \frac{d(d-1)}{2 \log 6} + \frac{(2d-1)}{3}
$. Our proof strategy relies on  an intriguing interplay between analysis, algebra and combinatorics.
\end{abstract}

\maketitle 

\section{Introduction}
\subsection{Background} 

The moment curve $\{\gamma(t):=(t,t^2,\ldots,t^d):\, t\in\R\}\subset\R^d$ is a simple
object with a large  group of symmetries whose Fourier restriction problem was  solved forty years ago:  Drury \cite{Dr85} showed that the adjoint restriction, or extension, operator to the moment curve,
\[\mathcal Ef(\x):=\int_\R e^{i\x\cdot\gamma(t)} f(t)\,\textup d t,\]
extends to a bounded linear operator from $L^r(\R)$ to $L^s(\R^d)$ if and only if $s>\frac{d^2+d+2}2$ and $s=\frac{d(d+1)}2r'$, in all dimensions $d\geq 2.$
For every exponent pair $(r,s)$ in this range, Biswas--Stovall \cite{BS23} established  the existence of {\it maximizers}, i.e., non-zero functions $f$ for which $\|\mathcal E f\|_s = \|\mathcal E \|_{r\to s} \|f\|_r$.
However, the nature of these functions remains elusive, as it is not even  clear what the candidates should be.
For the state of the art in sharp restriction theory in $\R^d$, we refer to the recent surveys \cite{FOS17, NOST23}.
\smallskip

The extension problem on finite fields was first considered in 2002 by Mockenhaupt--Tao \cite{MT}, who established several extension inequalities on paraboloids and cones. Interestingly, they also  provided a complete solution to the finite field extension problem for the moment curve  $\Gamma:=\{\gamma(\xi)=(\xi,\xi^2,\ldots, \xi^d),\,\xi\in\mathbb F_q\}$, whenever the characteristic of the field $\mathbb F_q$ is strictly larger than $d$. More precisely, the authors of \cite{MT} showed that the operator norm ${\bf R}^\ast_\Gamma(r\to s)$ in the inequality 
\[\|(f\sigma_\Gamma)^\vee\|_{L^s(\mathbb F_q^d,\textup d \x)} \leq {\bf R}^{\ast}_\Gamma(r\to s) \|f\|_{L^r(\Gamma,\textup d \sigma)}\]
can be bounded independently of the underlying field $\mathbb F_q$ if and only if $s\geq \max\{2d,dr'\}$. The crucial step turns out to be the endpoint estimate corresponding to ${\bf R}^\ast_\Gamma(2\to 2d)$. The measures $\d \x$ in the physical space and $\d\sigma$ along the curve $\Gamma$ in the frequency space  will be properly defined in \S \ref{sec_setup} below.
\smallskip

Sharp restriction theory and the finite field extension problem have received a great
deal of attention in the last two decades, but they intersected only very recently: González-Riquelme and Oliveira e Silva \cite{CGRDOS} proved  that constant functions maximize the extension inequality from the parabola $\mathbb{P}^1\subset \mathbb{F}^{2\ast}_q$ and the paraboloid $\mathbb{P}^2\subset \mathbb{F}_q^{3\ast}$ at the correspondent euclidean Stein--Tomas endpoints, and identified gaussians as the full set of  maximizers for the $L^2\to L^4$ extension inequality from $\mathbb{P}^2$ whenever $q$ is congruent to 1 modulo 4. They  established further   results on cones and hyperbolic paraboloids (i.e., saddles), but the higher codimensional case remained uncharted territory. On the other hand, (non-sharp) estimates for averaging operators for a few model cases, including the moment curve, have been considered in finite fields \cite{Carbery2008}.
\smallskip

In the present paper, we establish the first sharp endpoint extension inequalities  for the moment curve on finite fields.
\smallskip

The moment curve has featured prominently in other problems in analysis and number theory. One of the major advances in analytic number theory in the last decade has been the resolution by Bourgain, Demeter and Guth \cite{BDG16} of the general case of Vinogradov's mean value conjecture, a key statement in the theory of Waring's problem. This is implied by the sharp $\ell^2$ decoupling inequality for the moment curve, whose proof uses a multilinear variant of the decoupling
inequality, which in turn crucially relies on multilinear Kakeya--Brascamp--Lieb
type inequalities; see  \cite{GLYZK21} for a short proof, which instead uses  a bilinear variant of the decoupling
inequality.
Moment curves have also been used in applications to discrete geometry, e.g., cyclic polytopes \cite[Lemma 5.4.2]{Mat02}, the {\it no-three-in-line} problem (credited to P. Erdös by K. Roth in \cite{Ro51}), and a geometric proof of the chromatic number of Kneser graphs \cite[Section 3.5]{Mat03}.\smallskip

\subsection{Setup}\label{sec_setup} We generally follow the notation of Mockenhaupt--Tao \cite{MT} for the Fourier restriction/extension theory over finite fields. Throughout the text, we let $p$ be an odd prime and let $q = p^n$ for some $n \in \N$. We also let $\F_q$ denote the finite field with $q$ elements. 

\smallskip

 Let us  recall the basic definitions for Fourier analysis on the $d$-dimensional vector space $\F_q^d$. We denote by $\F_q^{d\ast}$ the dual of $\F_q^d$. The spaces $\F_q^{d}$ and $\F_q^{d\ast}$ are isomorphic, but their underlying natural measures are different. We endow the physical space $\F_q^{d}$ with the counting measure $\d \x$ and the frequency space $\F_q^{d\ast}$  with the normalized counting measure $\d \bxi$ so that $\F_q^{d\ast}$ has total mass $1$, i.e., 
\begin{align*}
\int_{\F_q^{d}} f(\x) \,\d\x := \sum_{\x \in \F_q^{d}} f(\x) \ \ \ {\rm and} \ \ \  \int_{\F_q^{d\ast}} g(\bxi) \,\d\bxi := \frac{1}{q^d}\sum_{\bxi \in \F_q^{d\ast}} g(\bxi). 
\end{align*}
Given a function $f: \F_q^{d} \to \C$, its Fourier transform $\f: \F_q^{d\ast} \to \C$ is defined as
\begin{align*}
\f(\bxi):= \int_{\F_q^{d}} f(\x) \,e(-\x \cdot \bxi)\, \d\x = \sum_{\x \in \F_q^{d}} f(\x) \,e(-\x \cdot \bxi)\,,
\end{align*}
where $e(y) := {\rm exp}(2 \pi i {\rm Tr}(y) /p)$ and the trace function ${\rm Tr}: \F_q \to \F_p$ is defined by ${\rm Tr}(y) := y + y^p + y^{p^2} + \ldots + y^{p^{n-1}}$. Here, as usual, $\x \cdot \bxi:= \sum_{i=1}^d x_i \xi_i$ if $\x = (x_1, \ldots, x_d)$ and $\bxi = (\xi_1, \ldots, \xi_d)$. Given $g:\F_q^{d\ast} \to \C$, its inverse Fourier transform $g^{\vee}:\F_q^{d} \to \C$ is defined as
\begin{align*}
g^{\vee}(\x):= \int_{\F_q^{d\ast}} g(\bxi) \,e(\x \cdot \bxi)\, \d\bxi = \frac{1}{q^d} \sum_{\bxi \in \F_q^{d\ast}} g(\bxi) \,e(\x \cdot \bxi).
\end{align*}
With this setup, we have Plancherel's identity
\begin{equation*}
\|f\|_{L^{2}(\F_q^d, \d \x)} = \|\f\|_{L^{2}(\F_q^{d\ast}, \d \bxi)}\,,
\end{equation*}
and Fourier inversion reads
$$
f(\x) = (\f)^{\vee} (\x)
$$
for each $\x \in \F_q^{d}$. The Fourier transform interchanges convolution and multiplication by 
\begin{align*}
\widehat{f_1}  \widehat{f_2} = \widehat{f_1 * f_2} \ \ \ {\rm and} \ \ \ \widehat{f_1f_2} = \widehat{f_1}* \widehat{f_2}.
\end{align*}
Note here that convolution in $\F_q^{d}$ is defined with respect to the counting measure $\d \x$, whereas convolution in $\F_q^{d\ast}$ is defined with respect to the normalized counting measure $\d \bxi$.

\smallskip

Given $d\geq 2$, let  $\gamma:\F_q \to \F_q^{d\ast}$ be given by 
$\gamma(\xi) = (\xi, \xi^2, \ldots, \xi^d),
$
and  $\Gamma = \{ \gamma(\xi)\, : \, \xi \in \F_q\} \subset \F_q^{d\ast}$ be the moment curve . Let $\sigma= \sigma_{\Gamma}$ be the normalized counting measure supported on $\Gamma$. Henceforth we assume  the characteristic $p$ of the underlying field to be strictly greater than the dimension $d$. Given $f: \Gamma \to \C$, the extension operator $(f \sigma)^{\vee}:\F_q^{d} \to \C$  is defined by 
\begin{align}\label{20250801_17:50}
(f \sigma)^{\vee}(\x) := \frac{1}{q} \sum_{\bxi \in \Gamma} f(\bxi) \, e(\x \cdot \bxi).
\end{align}
We seek to find the sharp endpoint inequality 
\begin{align}\label{20250526_09:29}
\|(f \sigma)^{\vee}\|_{L^{2d}(\F_q^d, \d \x)} \leq {\bf R}^\ast_\Gamma(2\to 2d) \,\|f\|_{L^{2}(\Gamma, \d\sigma)}\,,
\end{align}
identifying the optimal constant ${\bf R}^\ast_\Gamma(2\to 2d)$ and its maximizers.

\subsection{Main results} Our main results are closely related to certain combinatorial entities involving the classical problem of integer partitions. Let 
$$
\P_d := \{\bell = (\ell_1, \ell_2, \ldots, \ell_d ) \in \Z^d \ : \ \ell_1 \geq \ell_2 \geq \ldots \geq \ell_d \geq 0 \ \ {\rm and} \ \ \ell_1 + \ell_2 + \ldots + \ell_d = d\}
$$
denote the set of partitions of the integer  $d\geq 2$. For each $\bell \in \P_d$, let 
$$
\bb(\bell) := (b_0(\bell), b_1(\bell), b_2(\bell), \ldots, b_d(\bell)),
$$ 
where $b_j(\bell) := |\{ 1 \leq i \leq d \ : \ \ell_i = j\}|$ for $j \in\{ 1,2,\ldots,d\}$ and $b_0(\bell)$ is slightly different\footnote{This is just a minor choice of notation: had we defined $\ell_{d+1} = \ell_{d+2}  = \ldots = \ell_{q} = 0$, then $b_0$ would be counting the number of zeros in this extended sequence. We opted to write our partitions as $d$-dimensional vectors in order to avoid an additional  block of $(q-d)$ zeros at the end.}, given by $b_0(\bell) := | \{ 1 \leq i \leq d \ : \ \ell_i = 0\}| + (q-d)$.  Note that $\bb(\bell)$ is a $(d+1)$-dimensional vector. We adopt the multinomial  notation
$$
\binom{d}{\bell}: = \frac{d!}{\ell_1! \, \ell_2!\, \ldots\, \ell_d!} \ \ \ {\rm and} \ \ \ \binom{q}{\bb(\bell)}: = \frac{q!}{b_0(\bell)!\,b_1(\bell)! \, b_2(\bell)!\, \ldots\, b_d(\bell)!}.
$$
We start by formulating the following conjecture.
 
\begin{conjecture}\label{Conj1}
Let $p >d$. With notations as above, the inequality 
\begin{equation}\label{20250524_04:58}
\|(f \sigma)^{\vee}\|_{L^{2d}(\F_q^d, \d \x)} \leq \left( \frac{1}{q^d}\sum_{\bell \in \P_d} \binom{d}{\bell}^2 \binom{q}{\bb(\bell)} \right)^{\!\! \frac{1}{2d}} \|f\|_{L^{2}(\Gamma, \d\sigma)}
\end{equation}
holds and is sharp. Moreover, $f$ is a maximizer of \eqref{20250524_04:58} if and only if $|f|$ is constant. 
\end{conjecture}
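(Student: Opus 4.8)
\emph{The plan is to convert \eqref{20250524_04:58} into a single sharp inequality for a symmetric polynomial, and then attack that inequality by majorization techniques.} Put $g := f\circ\gamma\colon\F_q\to\C$ and $a=(a_\xi)_{\xi\in\F_q}$ with $a_\xi:=|g(\xi)|^2\ge0$. Expanding the $2d$-th power of \eqref{20250801_17:50} and using orthogonality of the additive characters $\x\mapsto e(\x\cdot\v)$ on $\F_q^d$, one obtains
\[
\|(f\sigma)^{\vee}\|_{L^{2d}(\F_q^d)}^{2d}=\frac1{q^{d}}\sum_{\substack{\bxi,{\bm\eta}\in\F_q^d\\ \gamma(\xi_1)+\dots+\gamma(\xi_d)=\gamma(\eta_1)+\dots+\gamma(\eta_d)}} g(\xi_1)\cdots g(\xi_d)\,\overline{g(\eta_1)\cdots g(\eta_d)}.
\]
The constraint says that the power sums of $\bxi$ and ${\bm\eta}$ agree up to order $d$; since $p>d$, Newton's identities are invertible (one divides only by $1,2,\dots,d$), so this is equivalent to the equality of multisets $\{\!\{\xi_1,\dots,\xi_d\}\!\}=\{\!\{\eta_1,\dots,\eta_d\}\!\}$. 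On this locus the summand equals $\prod_\xi a_\xi^{m_\xi}$, where $m_\xi$ is the multiplicity of $\xi$ in the common multiset. Grouping multisets by their shape $\bell\in\P_d$ — a shape-$\bell$ multiset admits $\binom{d}{\bell}$ orderings, and there are $\binom{q}{\bb(\bell)}$ such multisets — yields the identity
\[
\|(f\sigma)^{\vee}\|_{L^{2d}(\F_q^d)}^{2d}=\frac1{q^{d}}\,F(a),\qquad F(a):=\sum_{\bell\in\P_d}\binom{d}{\bell}^{2} m_\bell(a),
\]
with $m_\bell$ the monomial symmetric polynomial in the $q$ variables $(a_\xi)_{\xi\in\F_q}$. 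Since $m_\bell(\mathbf 1)=\binom{q}{\bb(\bell)}$ and $\|f\|_{L^2(\Gamma,\d\sigma)}^{2d}=q^{-d}\big(\sum_\xi a_\xi\big)^{d}$, Conjecture~\ref{Conj1} is \emph{equivalent} to: for all $a\in\R_{\ge0}^{q}$,
\[
F(a)\le F(\mathbf 1)\Big(\tfrac1q\textstyle\sum_\xi a_\xi\Big)^{d},
\]
with equality if and only if $a$ is constant. By homogeneity this reduces to showing $\max\{F(a): a\ge0,\ \sum_\xi a_\xi=q\}=F(\mathbf 1)$, attained only at $a=\mathbf 1$.

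The main step is this optimization problem, and the plan is to deduce it from the assertion that $F$ is \emph{strictly Schur-concave} on the nonnegative orthant. Because $F$ is symmetric, this forces the maximum on the simplex $\{a\ge0:\sum_\xi a_\xi=q\}$ to be attained uniquely at the barycenter $\mathbf 1$, since $\mathbf 1\prec a$ strictly for every non-constant $a$ on that simplex. By the Schur--Ostrowski criterion it suffices to show that $(a_\xi-a_{\xi'})(\partial_{a_\xi}F-\partial_{a_{\xi'}}F)\le0$ for all $\xi\neq\xi'$, strictly when $a_\xi\ne a_{\xi'}$. Writing $\partial_{a_\xi}F-\partial_{a_{\xi'}}F=(a_\xi-a_{\xi'})\,G(a)$ — the left side is a polynomial vanishing on $\{a_\xi=a_{\xi'}\}$, hence divisible by $a_\xi-a_{\xi'}$ — the task becomes to prove $G\le0$ on the positive orthant, which I would establish by checking that every monomial coefficient of $G$ is $\le0$. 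Equivalently, and perhaps more transparently, one may seek a \emph{nonnegative} decomposition of $F(\mathbf 1)(\tfrac1q\sum_\xi a_\xi)^{d}-F(a)=\sum_\bell \binom{d}{\bell}\big(\mu-\binom{d}{\bell}\big)m_\bell(a)$ (where $\mu:=q^{-d}F(\mathbf 1)$) into Muirhead differences $M^*_{\bell'}-M^*_{\bell}\ge0$ on $\R^q_{\ge0}$ (valid whenever $\bell'\succeq\bell$), $M^*_\bell(a)=\sum_{\pi\in S_q}\prod_\xi a_\xi^{\ell_{\pi(\xi)}}$. The relevant structural facts are that $\binom{d}{\bell}$ decreases along the majorization order, that $\sum_\bell\binom{d}{\bell}\binom{q}{\bb(\bell)}=q^{d}$, and hence that the coefficient vector above is positive on concentrated shapes, negative on spread-out shapes, and sums to zero against the weights $m_\bell(\mathbf 1)$ — exactly the profile that a transport along Muirhead differences can realize.

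Granting the strict inequality, a function $f$ saturates \eqref{20250524_04:58} precisely when $a=(|f(\gamma(\xi))|^2)_\xi$ is constant, i.e.\ when $|f|$ is constant on $\Gamma$; conversely any such $f$ is a maximizer by the identity of the first step, which gives the conjecture. The whole difficulty is concentrated in the Schur-concavity claim, and I do not expect a term-by-term argument to work: individual monomial symmetric polynomials need not be Schur-concave — for instance $m_{(d)}(a)=\sum_\xi a_\xi^{d}$ is Schur-\emph{convex} — so the weights $\binom{d}{\bell}^{2}$ must be used in an essential way. Proving that $G$ has nonpositive coefficients, equivalently that the transportation problem producing the Muirhead decomposition is feasible for all $d$, is a genuine combinatorial problem; I expect it is exactly what forces the dimension bound $d\le20$ and the largeness hypothesis on $q$ in the paper's theorems, and the natural machinery — encoding the pairs $(\bxi,{\bm\eta})$ with equal multiset as a weighted bipartite graph and arguing via fractional matchings — is presumably what the keywords ``weighted graphs'' and ``fractional matching'' allude to.
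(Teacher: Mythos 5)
Your reformulation coincides with the paper's: expanding the $2d$-th power, invoking orthogonality and Newton's identities under $p>d$ to identify the solution set with ordered multisets, grouping by shape $\bell$ to produce the weights $\binom{d}{\bell}^2$ and $\binom{q}{\bb(\bell)}$, and reducing the conjecture to $F(a)\le F(\mathbf 1)\big(\tfrac1q\sum_\xi a_\xi\big)^d$ with $F=\sum_\bell\binom{d}{\bell}^2 m_\bell$ is precisely the content of \eqref{20250527_18:14}--\eqref{20250527_22:29} (the paper uses the full symmetric sums $\Sigma_\bell=(b_0!\cdots b_d!)\,m_\bell$, a cosmetic normalization). Your route (b) — writing the target inequality as $\sum_\bell\binom{d}{\bell}\bigl(\mu-\binom{d}{\bell}\bigr)m_\bell\geq 0$, viewing it as a transport of nonnegative mass from spread-out to concentrated shapes along the dominance order, and invoking feasibility of a fractional matching on the weighted bipartite graph — is exactly the paper's strategy: the paper splits $\P_d=N\cup P$ by the sign of $\omega_\bell$, and applies Strassen's theorem to reduce feasibility to the finitely many Hall-type inequalities \eqref{20250529_21:12}. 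You also correctly foresee that this finite system is what forces the dimension cutoff and the $q$-largeness hypothesis of Theorems \ref{Thm1} and \ref{Thm2}.

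Two remarks where you diverge or are incomplete. First, your route (a) — proving strict Schur-concavity of $F$ by showing the cofactor $G$ in $\partial_{a_\xi}F-\partial_{a_{\xi'}}F=(a_\xi-a_{\xi'})\,G(a)$ has nonpositive coefficients — is not what the paper does and is strictly stronger than the target inequality (Schur-concavity controls $F$ along every majorization chain, while the conjecture only asserts a maximum at the barycenter). It does happen to work for small $d$ (e.g.\ $d=2,3$ one computes $G$ has all-negative coefficients), but there is no a priori reason it should be equivalent to the Muirhead-transport feasibility, and the paper never addresses it. Second, the equality characterization is not as immediate as you indicate: Muirhead's inequality in its classical form is stated for strictly positive variables, and since $|f|$ may vanish on part of $\Gamma$, the paper has to extend the equality analysis to nonnegative tuples (Lemma \ref{Muirhead}, cases (iii) and (iv)) and then use the specific structure of the edge incident to $\p=(d,0,\dots,0)$ to rule out degenerate equality cases; your proposal glosses over this. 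Finally, as you yourself note, the proposal reduces the conjecture to a combinatorial feasibility problem without resolving it — which mirrors the paper, whose Section 2 performs the same reduction and whose Theorems \ref{Thm1} and \ref{Thm2} settle only the two complementary regimes.
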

 
We make progress towards this conjecture in two complementary directions, namely, in low dimensions $d$ and in the regime of large field cardinality $q$. Our two main results are as follows. 

\begin{theorem}\label{Thm1}
Conjecture \ref{Conj1} holds for $2 \leq d \leq 20$.
\end{theorem}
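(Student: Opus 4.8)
The plan is to convert the analytic inequality \eqref{20250524_04:58} into a finite family of elementary inequalities attached to the dominance order on partitions of $d$, and then to verify that family for $2\le d\le 20$; the reduction interweaves Plancherel/orthogonality, Newton's identities, Muirhead's inequality, and a network-flow argument.

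First I would expand the $2d$-th power on the left of \eqref{20250524_04:58} using \eqref{20250801_17:50} and the orthogonality of additive characters. This shows that $q^{d}\|(f\sigma)^\vee\|_{L^{2d}(\F_q^d,\d\x)}^{2d}$ equals the sum of $\prod_{i}f(\gamma(\xi_i))\cdot\overline{\prod_{j}f(\gamma(\eta_j))}$ over all $(\xi_1,\dots,\xi_d,\eta_1,\dots,\eta_d)\in\F_q^{2d}$ with $\gamma(\xi_1)+\dots+\gamma(\xi_d)=\gamma(\eta_1)+\dots+\gamma(\eta_d)$. Since $p>d$, this constraint forces the power sums $p_1,\dots,p_d$ of the two tuples to agree, hence --- by Newton's identities --- the multisets $\{\xi_i\}$ and $\{\eta_j\}$ to coincide; on such terms the summand collapses to $\prod_i|f(\gamma(\xi_i))|^2$. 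Writing $a_\xi:=|f(\gamma(\xi))|^2\ge 0$ and letting $m_{\bell}$ be the monomial symmetric polynomial in the $q$ variables $(a_\xi)_{\xi\in\F_q}$ indexed by $\bell\in\P_d$, one gets $\|(f\sigma)^\vee\|_{2d}^{2d}=q^{-d}\sum_{\bell}\binom{d}{\bell}^{2}m_{\bell}(a)$, while by the multinomial theorem $\|f\|_{L^2(\Gamma,\d\sigma)}^{2d}=q^{-d}\big(\sum_\xi a_\xi\big)^d=q^{-d}\sum_{\bell}\binom{d}{\bell}m_{\bell}(a)$. Setting $S_d(q):=\sum_{\bell\in\P_d}\binom{d}{\bell}^{2}\binom{q}{\bb(\bell)}$ and using that $m_{\bell}(1,\dots,1)=\binom{q}{\bb(\bell)}$, the inequality \eqref{20250524_04:58} is then equivalent to
\begin{equation*}
\sum_{\bell\in\P_d}\binom{d}{\bell}^{2}\,m_{\bell}(a)\ \le\ \frac{S_d(q)}{q^d}\sum_{\bell\in\P_d}\binom{d}{\bell}\,m_{\bell}(a)\qquad\text{for all }a=(a_\xi)_{\xi\in\F_q},\ a_\xi\ge 0,
\end{equation*}
with equality when $a$ is constant --- which already pins down the conjectural extremizer.

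Rewriting this as $\sum_{\bell}c_{\bell}\,m_{\bell}(a)\ge 0$ with $c_{\bell}=\binom{d}{\bell}\big(\bar B_d(q)-\binom{d}{\bell}\big)$ and $\bar B_d(q):=S_d(q)/q^d$, one sees the $c_{\bell}$ are not all of one sign, so there is genuine content. The key input is Muirhead's inequality: for $a\ge 0$, the normalized polynomials $\widetilde m_{\bell}(a):=m_{\bell}(a)/\binom{q}{\bb(\bell)}$ obey $\widetilde m_{\bell}(a)\ge \widetilde m_{{\bm k}}(a)$ whenever $\bell\succeq{\bm k}$ in the dominance order on $\P_d$. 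Hence it suffices to exhibit $\kappa_{\bell,{\bm k}}\ge 0$, over pairs $\bell\succ{\bm k}$, with $\sum_{\bell}c_{\bell}m_{\bell}=\sum_{\bell\succ{\bm k}}\kappa_{\bell,{\bm k}}\big(\widetilde m_{\bell}-\widetilde m_{{\bm k}}\big)$. Matching the coefficients of the $\widetilde m_{\bell}$, this says precisely that the numbers $\tilde c_{\bell}:=c_{\bell}\binom{q}{\bb(\bell)}$ (which sum to $0$, since equality holds at constant $a$) are the net out-flows of a non-negative flow on the Hasse diagram of the poset $(\P_d,\succeq)$ --- the ``weighted graph'' and ``fractional matching'' of the keywords.

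By max-flow--min-cut (Gale--Hoffman feasibility for prescribed divergences), such a flow exists if and only if $\sum_{\bell\in D}\tilde c_{\bell}\le 0$ for every order ideal $D\subseteq\P_d$; equivalently, the $w$-weighted average of $\binom{d}{\bell}$ over $D$ is at least $\bar B_d(q)$, where $w_{\bell}:=\binom{d}{\bell}\binom{q}{\bb(\bell)}$. For $2\le d\le 5$ the dominance order is a chain; since $\bell\mapsto\binom{d}{\bell}$ is Schur-concave on $\P_d$ it is monotone along the chain, while $\sum_{\bell}\tilde c_{\bell}=0$, so the $\tilde c_{\bell}$ change sign only once along the chain and every such ideal-sum is automatically $\le 0$ --- this is just Chebyshev's sum inequality for oppositely ordered sequences. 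The substance is $6\le d\le 20$, where $(\P_d,\succeq)$ is a genuine partial order and one must actually produce the flow, i.e.\ verify the order-ideal inequalities. This reduces Theorem~\ref{Thm1} to a finite list of elementary inequalities --- one per order ideal of each $\P_d$, $d\le 20$ --- each of which, after clearing denominators, becomes a polynomial inequality in $q$ that one checks for all admissible $q=p^n>d$. I expect this final verification to be the main obstacle: it is finite but large ($|\P_{20}|=627$, with many order ideals), and the lack of a uniform argument is presumably why the result halts at $d=20$ while Conjecture~\ref{Conj1} stays open. The maximizer claim then follows by chasing the equality case: $\sum_{\bell}c_{\bell}m_{\bell}(a)=0$ forces $\widetilde m_{\bell}(a)=\widetilde m_{{\bm k}}(a)$ along each edge carrying positive flow, and since the top partition $(d,0,\dots,0)$ has strictly positive net out-flow, one gets $\widetilde m_{(d,0,\dots,0)}(a)=\widetilde m_{{\bm k}}(a)$ for some ${\bm k}\prec(d,0,\dots,0)$; the strict Muirhead equality case, plus a short argument excluding degenerate low-support $a$, then forces $a$ --- and therefore $|f|$ --- to be constant.
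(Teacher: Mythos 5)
Your proposal follows the paper's route essentially step for step: expand via orthogonality, invoke Newton's identities to reduce to multisets, reformulate in terms of (normalized) symmetric polynomials, split off the negative weights, apply Muirhead along the dominance order, and reduce existence of the required convex combination to a finite, computer-checkable feasibility criterion. The only genuine difference is in how that last reduction is packaged, and it is worth a word.

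The paper builds a \emph{bipartite} weighted graph on $N\cup P$ (negative- and nonnegative-weight partitions), puts an edge precisely when $\n\leq\p$, and applies Strassen's theorem: feasibility holds iff $w(U)\leq w(\mathcal N(U))$ for every $U\subset N$. You instead pose a transshipment problem on the Hasse diagram of $(\P_d,\succeq)$ with prescribed divergences $\tilde c_{\bell}=\omega_\bell$, and invoke Gale--Hoffman: feasibility holds iff $\sum_{\bell\in D}\omega_\bell\leq 0$ for every order ideal $D$. These two criteria are in fact equivalent (a bipartite flow lifts to a Hasse flow by routing along chains; conversely, path-decompose a Hasse flow and contract paths to $P$--$N$ edges; and the two cut conditions can be shown equivalent directly using $\sum_\bell\omega_\bell=0$ together with the facts that $N$ is a down-set and $P$ an up-set). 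But they do not yield the same number of checks: the paper iterates over $2^{|N|}$ subsets, while your formulation iterates over all order ideals of $\P_d$, which is typically a much larger family since $|N|$ stays small in the regime of interest. So the paper's bipartite reduction is computationally sharper, and this matters given the explicit complexity discussion in \S\ref{Sec3.1_comp}. Two smaller remarks. First, your observation that for $d\leq 5$ the dominance order is a chain and the feasibility is automatic (a single sign change plus $\sum\omega_\bell=0$) is correct and nicely disposes of those cases without computation; the paper does not isolate this. Second, you gesture at verifying each order-ideal inequality ``for all admissible $q>d$ as a polynomial inequality in $q$,'' whereas the paper instead proves the separate asymptotic bound of Theorem~\ref{Thm2} to dispatch all large $q$ uniformly, leaving only a finite rectangle of $(d,q)$ pairs to check; your approach could in principle be made rigorous but as stated it is not as developed. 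The equality-case chase via the top partition $(d,0,\ldots,0)$ and the refined Muirhead equality conditions matches the paper's Lemma~\ref{Muirhead} and \S\ref{strategy}.
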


\begin{theorem}\label{Thm2}
Conjecture \ref{Conj1} holds whenever 
$$
q \geq \frac{d(d-1)}{2 \log 6} + \frac{(2d-1)}{3}.
$$
\end{theorem}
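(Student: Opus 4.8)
The plan is to reduce the sharp extension inequality to a purely combinatorial inequality about symmetric polynomials, and then to establish that inequality when $q$ is large by isolating one dominant term and dominating all the others by it.

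\emph{Step 1 (reduction).} Put $g=f\circ\gamma$ and $w_\xi=|g(\xi)|^2\ge 0$. Squaring out $(f\sigma)^\vee$ and using $\int_{\F_q^d}e(\x\cdot\v)\,\d\x=q^d\one[\v=\0]$ gives
\[
q^d\,\big\|(f\sigma)^\vee\big\|_{L^{2d}(\F_q^d)}^{2d}=\sum_{\substack{(\xi_1,\dots,\xi_d),(\eta_1,\dots,\eta_d)\in\F_q^d\\ \gamma(\xi_1)+\dots+\gamma(\xi_d)=\gamma(\eta_1)+\dots+\gamma(\eta_d)}}g(\xi_1)\cdots g(\xi_d)\,\overline{g(\eta_1)\cdots g(\eta_d)}.
\]
Because $p>d$, Newton's identities make the constraint equivalent to the equality of the multisets $\{\xi_i\}$ and $\{\eta_j\}$; grouping by the multiset and its partition type $\bell\in\P_d$ turns this into
\[
q^d\,\big\|(f\sigma)^\vee\big\|_{L^{2d}(\F_q^d)}^{2d}=F(w):=\sum_{\bell\in\P_d}\binom{d}{\bell}^{\!2}m_\bell(w),
\]
where $m_\bell$ is the monomial symmetric polynomial in the $q$ variables $(w_\xi)_{\xi\in\F_q}$, $m_\bell(\mathbf 1)=\binom{q}{\bb(\bell)}$, and $\|f\|_{L^2(\Gamma,\d\sigma)}^2=q^{-1}\sum_\xi w_\xi$. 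By homogeneity, Conjecture~\ref{Conj1} is equivalent to: $F(w)\le F(\mathbf 1)$ for every $w$ in the simplex $\Delta_q:=\{w\in[0,\infty)^q:\sum_\xi w_\xi=q\}$, with equality iff $w=\mathbf 1$. (The hypothesis on $q$ forces $q>d$.)

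\emph{Step 2 (symmetric-function reformulation).} View $F$ as a symmetric polynomial of degree $d$ and expand it in the basis of products of elementary symmetric polynomials, $F=\sum_{\mu}a_\mu\prod_{j\ge 1}e_j^{\mu_j}$ over multi-indices with $\sum_j j\mu_j=d$ and $a_\mu\in\Z$. Since $e_1(w)=q$ on $\Delta_q$, this reads $F(w)=\sum_\mu a_\mu q^{\mu_1}\prod_{j\ge 2}e_j(w)^{\mu_j}$. By Maclaurin's inequality, $0\le e_j(w)\le\binom{q}{j}$ on $\Delta_q$, with either equality (for $1\le j\le q$) iff $w=\mathbf 1$; hence every term with $a_\mu\ge 0$ is maximized precisely at $w=\mathbf 1$. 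Therefore, if all $a_\mu\ge 0$ the inequality and its equality case follow at once for \emph{every} $q>d$; verifying this positivity for small $d$ is one natural route to Theorem~\ref{Thm1}. For general $d$ some $a_\mu$ may be negative, and this is exactly what forces a largeness condition on $q$.

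\emph{Step 3 (the large-$q$ argument, and the combinatorics).} Isolate the coefficient $A:=a_\mu$ attached to $e_d=m_{(1^d)}$; one checks $A>0$ and $A\asymp_d 1$, so $-A\big(\binom{q}{d}-e_d(w)\big)\le 0$ is a genuine restoring term in $F(w)-F(\mathbf 1)$. For each $\mu$ with $a_\mu<0$ one establishes a uniform pointwise bound
\[
|a_\mu|\,q^{\mu_1}\Big(\textstyle\prod_{j\ge 2}\binom{q}{j}^{\mu_j}-\prod_{j\ge 2}e_j(w)^{\mu_j}\Big)\ \le\ \kappa_\mu(q)\,A\,\Big(\binom{q}{d}-e_d(w)\Big),\qquad w\in\Delta_q,
\]
with $\kappa_\mu(q)$ explicit and decreasing in $q$. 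The two inputs are a \emph{deficit comparison} $\binom{q}{k}-e_k(w)\le C_{k,d}(q)\big(\binom{q}{d}-e_d(w)\big)$ valid throughout $\Delta_q$ (derived from $e_k(\mathbf 1+v)=\sum_{j=0}^k\binom{q-j}{k-j}e_j(v)$ together with Newton/Maclaurin-type control of the $e_j(v)$, using $\sum_\xi v_\xi=0$ and $v_\xi\ge-1$), and the telescoping estimate reducing $\prod x_j-\prod y_j$ to a sum of single differences when $0\le y_j\le x_j$. Summing over the finitely many bad $\mu$ and requiring $\sum_{\mu:a_\mu<0}\kappa_\mu(q)\le 1$ yields $F(\mathbf 1)-F(w)\ge\big(1-\sum_\mu\kappa_\mu(q)\big)A\big(\binom{q}{d}-e_d(w)\big)\ge 0$, with equality forcing $e_d(w)=\binom{q}{d}$, hence $w=\mathbf 1$. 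The condition $\sum_\mu\kappa_\mu(q)\le 1$ unwinds to a geometric-type series in $1/q$ with ratio governed by $d(d-1)/q$, and majorizing that series is what produces the threshold $q\ge\frac{d(d-1)}{2\log 6}+\frac{2d-1}{3}$ (the constant $6$, hence $\log 6$, entering from the chosen majorant). The bookkeeping for the $a_\mu$ and the deficit comparison is organized by attaching to each partition $\bell$ a weighted multigraph recording $\bb(\bell)$; the pairings matching each negative term to the $e_d$-term are produced by fractional perfect matchings, whose existence is certified by a Hall--Muirhead criterion, and Muirhead's inequality converts each matched edge into a valid domination between monomial symmetric polynomials.

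\emph{Main obstacle.} The crux is Step~3: obtaining the deficit comparison $\binom{q}{k}-e_k(w)\lesssim\binom{q}{d}-e_d(w)$ with constants \emph{uniform over the entire simplex} — it is a routine Hessian computation near $w=\mathbf 1$, and the $e_d$-term dominates trivially far from $\mathbf 1$, but the intermediate regime is genuinely delicate — and then combining it with sharp enough bounds on the $|a_\mu|$ so that the resulting series closes below $1$ exactly when $q$ exceeds the stated threshold. The weighted-graph/fractional-matching reformulation is the device that makes this combinatorial bookkeeping manageable.
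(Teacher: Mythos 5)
Your Step 1 is essentially the paper's reduction (orthogonality plus Newton's identities turn the $L^{2d}$ norm into a symmetric‑function inequality on the simplex), and the general instinct to isolate one dominant term is related to the paper's move of keeping only the $(1,\dots,1)$ summand when bounding ${\bf A}$ from below. But the mechanism you propose for closing the argument at large $q$ is genuinely different, and as written it has a gap at exactly the place you flag as the ``main obstacle.''

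The paper's large‑$q$ argument is structural and short. After the reduction, one has weights $\omega_{\bell}$ with $\omega_{\bell}<0$ iff $\bell!<d!/{\bf A}$. The key observation is that if $d!/{\bf A}\le 6$, the only partitions with $\bell!<6$ are $(1,1,\dots,1)\le(2,1,\dots,1,0)\le(2,2,1,\dots,1,0,0)$, and all three are $\le$ \emph{every} other partition in the dominance order. Hence the negative‑weight set $N$ is a short chain sitting at the bottom of the poset, every $\n\in N$ is comparable to every $\p\in P$, and Strassen's condition \eqref{20250529_21:12} reduces to the total weight balance \eqref{20250527_22:33}, so the fractional matching exists trivially. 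Keeping only the $(1^d)$ term in ${\bf A}$ and taking logs then shows $d!/{\bf A}\le 6$ whenever $q\ge\frac{d(d-1)}{2\log 6}+\frac{2d-1}{3}$. This is where $\log 6$ actually comes from: $6=3!$ is the smallest value of $\bell!$ for a partition \emph{outside} that initial chain.

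Your Step 3 replaces this by expanding $F$ in products of elementary symmetric polynomials and invoking a pointwise ``deficit comparison'' $\binom{q}{k}-e_k(w)\le C_{k,d}(q)\bigl(\binom{q}{d}-e_d(w)\bigr)$ uniformly on the simplex. This lemma is not proved, is not obviously true in the intermediate regime you mention, and is not needed once one sees the collapse of $N$ described above. The subsequent derivation of the threshold — ``a geometric-type series in $1/q$ \dots [with] the constant $6$ entering from the chosen majorant'' — is an assertion, not a computation, and it does not explain the specific combinatorial meaning of $6$. There is also a bookkeeping issue: the coefficient $A$ you attach to $e_d$ is not simply $\binom{d}{(1^d)}^2=(d!)^2$, because under the change of basis from monomial to elementary symmetric polynomials every $m_{\bell}$ contributes to the $e_d$ coefficient, so ``one checks $A>0$ and $A\asymp_d 1$'' is itself nontrivial. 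Finally, the fractional‑matching/Muirhead machinery you mention at the end of Step 3 is grafted onto the $e_j$‑based argument without a clear interface. So while the reduction and the high‑level vocabulary are on the right track, the proposal is missing the paper's central simplification and would require substantial additional work to close.
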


\noindent {\sc Remark}: The case $d=2$ of Theorem \ref{Thm1} has already been settled in the recent M.Sc.\@ thesis of  P.\@ Ronda \cite{Ro25} under the supervision of the fourth author.

\smallskip

The strategy to prove   these results relies on an interesting interplay between analysis, algebra and combinatorics, and  can be divided into the following five main steps. 

\subsubsection*{Step 1. Counting solutions} The convolution structure of the problem allows for a reformulation in which one needs to understand the size of the preimage for each point in the support of the $d$-fold convolution of the normalized counting measure supported on the moment curve.

\subsubsection*{Step 2. Symmetric sums and Muirhead's inequality} The proposed inequality can then be rephrased as an inequality in terms of symmetric sums, with explicit but somewhat complicated weights. At this point, Muirhead's inequality \cite{Muirhead Thesis} emerges as the natural tool to be used.

 \subsubsection*{Step 3. Fractional matching} In order to appropriately use Muirhead's inequality, one needs to verify a finite number of conditions in the spirit of a fractional matching version of Hall's marriage theorem for bipartite graphs. The suitable result for our purposes is known as Strassen’s theorem \cite{Str65}. 

 \subsubsection*{Step 4. Computational part} At this point we have reduced the problem to a finite, but potentially large, number of verifications for each given $d$ and $q$. The case of large $q$ is taken care of by Step 5. We run our algorithm to verify the remaining conditions in low dimensions and conclude the proof of Theorem \ref{Thm1}. A subtle issue here is that, when $d$ grows and $q$ is close to $d$, the number of conditions to be verified grows rapidly; see \S \ref{Sec3.1_comp}.

 \subsubsection*{Step 5. Asymptotic methods} In arbitrary dimensions, an adequate lower bound on $q$ allows for the use of asymptotic methods to show that we are placed in a favourable situation to use Muirhead's inequality directly. This is the idea for the proof of Theorem \ref{Thm2}.
 \smallskip
 
We regard the conceptual part of this general strategy, and in particular the reduction of the original problem to a finite number of checks,  as the main contribution of the paper.
The implementation of the computational part which, in particular, yields Conjecture \ref{Conj1}
 in low dimensions ({$d\leq 20$}),  is primarily intended as a proof of concept.
Additional computational power will likely lead to the resolution of Conjecture \ref{Conj1} beyond $d=20$.
 \section{Proofs}\label{Proofs}
 
In order to simplify the notation, we will usually drop the star from the frequency space $\F_q^{d\ast}$, and let ${\bf C} :=  {\bf R}^\ast_\Gamma(2\to 2d)$.

\subsection{Counting solutions} We first observe that the extension inequality \eqref{20250526_09:29} is equivalent to the combinatorial inequality
\begin{align}\label{20250527_18:14}
\sum_{\bzeta \in \F_q^{d}} \left| \sum_{\substack{\bxi_1  + \ldots + \bxi_d = \bzeta \\ \bxi_i \in \Gamma}}\   \prod_{i=1}^d f(\bxi_i)\right|^2 \leq {\bf C}^{2d} \left( \sum_{\bxi \in \Gamma} |f(\bxi)|^2 \right)^d.
\end{align}
In fact, this is a specialization of \cite[Proposition 2.1]{CGRDOS} to our situation and, for the convenience of the reader, we briefly reproduce the proof here. Using the definition \eqref{20250801_17:50}, expanding the product, and reversing the order of summation, we get 
\begin{align*}
\sum_{\x \in \F_q^d} \big|(f \sigma)^{\vee}(\x) \big|^{2d} =  \frac{1}{q^{2d}}  \sum_{\x \in \F_q^d} \left|\sum_{\bxi \in \Gamma} f(\bxi) \, e(\x \cdot \bxi)\right|^{2d} = \frac{1}{q^{d}} \sum^{*}  \prod_{i=1}^d f(\bxi_i) \overline{f({\bm \eta}_i)}\,,
\end{align*}
where the sum $\displaystyle\sum^{*}$ runs over $d$-tuples $(\bxi_i)_{i=1}^{d}, ({\bm \eta}_i)_{i=1}^{d} \in \Gamma^d$ such that $\sum_{i=1}^d \bxi_i = \sum_{i=1}^d {\bm \eta}_i$. Note the use of the orthogonality relation
\begin{align}\label{20250801_18:08}
\sum_{\x \in \F_q^d} e \left( \x \cdot \left(\sum_{i=1}^d \bxi_i - \sum_{i=1}^d {\bm \eta}_i\right) \right) = 0
\end{align}
unless $\sum_{i=1}^d \bxi_i - \sum_{i=1}^d {\bm \eta}_i = 0$, in which case the left-hand side of \eqref{20250801_18:08} equals $q^d$. Finally, in order to arrive at \eqref{20250527_18:14}, we note that 
\begin{equation*}
\sum^{*}  \prod_{i=1}^d f(\bxi_i) \overline{f({\bm \eta}_i)} = \sum_{\bzeta \in \F_q^{d}} \left| \sum_{\substack{\bxi_1  + \ldots + \bxi_d = \bzeta \\ \bxi_i \in \Gamma}}\   \prod_{i=1}^d f(\bxi_i)\right|^2 .
\end{equation*}

\smallskip

Let $\bzeta = (\zeta_1, \ldots, \zeta_d) \in \F_q^{d}$. Our next order of business is to count the number of solutions of 
$$
\bxi_1  + \ldots + \bxi_d = \bzeta
$$
with each $\bxi_i \in \Gamma$. Writing $\bxi_i = (t_i, t_i^2, \ldots, t_i^d)$, and taking into account permutations, this amounts to counting the number of solutions  to the system
\begin{align}\label{E : Phi = zeta}
\begin{split}
t_1 + t_2 + \ldots + t_d &= \zeta_1\\
t_1^2 + t_2^2 + \ldots + t_d^2 &= \zeta_2\\
\vdots  \ \ \ \ \ \  \ & \\
t_1^d + t_2^d + \ldots + t_d^d &= \zeta_d.
\end{split}
\end{align}
Then each of the sums $s_k: = \sum_{i_1 < i_2 < \ldots < i_k} t_{i_1}t_{i_2}\ldots t_{i_k}$ is completely determined as a function of $\bzeta = (\zeta_1, \ldots, \zeta_d)$, and conversely, by Newton's power sum formulas \cite[p.~81]{Waerden53}. One sees that the set $\{t_1, t_2, \ldots, t_d\}$ must coincide with the set of roots of the polynomial
$$
P_{\bzeta}(X) := X^d - s_1X^{d-1} + s_2X^{d-2} - s_3X^{d-3}+\ldots + (-1)^ds_d.
$$
Therefore, the system \eqref{E : Phi = zeta} admits solutions if and only if the polynomial $P_{\bzeta}(X)$ factors completely over $\F_q$. Moreover, if $P_{\bzeta}(X)$ factors as
$$
P_{\bzeta}(X) = (X - r_1)^{\ell_1} (X - r_2)^{\ell_2} \ldots (X - r_k)^{\ell_k},
$$
with $r_1, r_2, \ldots, r_k \in \F_q$ being distinct roots, respectively of multiplicity $\ell_1 \geq \ell_2\geq  \ldots \geq \ell_k$, we must count the number of ways to associate the $t_i$'s to the $r_j$'s. From the total of $d$ available $t_i$'s we can choose $\ell_1$ to be $r_1$ in $\binom{d}{\ell_1}$ ways. After this choice, from the remaining $(d- \ell_1)$ available $t_i$'s we can choose $\ell_2$ to be $r_2$ in $\binom{d - \ell_1}{\ell_2}$ ways, and so on. The total number of possibilities is then
$$
\binom{d}{\ell_1}\binom{d- \ell_1}{\ell_2}\binom{d- \ell_1 - \ell_2}{\ell_3} \ldots \binom{d- \ell_1 - \ell_2 - \ldots - \ell_{k-1}}{\ell_k} = \binom{d}{\bell}.
$$
We can summarize this fact as follows. Let $\bell = (\ell_1, \ell_2, \ldots, \ell_d) \in \P_d$, and let $W_{\bell} \subset \F_q^d$ be the set of $\bzeta \in \F_q^d$ such that the polynomial $P_{\bzeta}(X)$ factors completely over $\F_q$, having roots of multiplicity $\ell_1 \geq \ell_2\geq  \ldots \geq \ell_d$ (notice that a potential string of zeros at the end of the partition is harmless). Then, if $\bzeta \in W_{\bell}$, the system \eqref{E : Phi = zeta} admits $\binom{d}{\bell}$ solutions.\footnote{This can be equivalently rewritten  as $\sigma^{\ast d}(\bzeta) =\binom{d}{\bell}$, where the $d$-fold convolution $\sigma^{\ast d}$ is defined as 
$$\sigma^{\ast d}(\bzeta) := \int_{\Gamma^d} {\bm \delta}(\bzeta - \bxi_1 - \bxi_2 - \ldots - \bxi_d) \,\d\sigma(\bxi_1)\,\d\sigma(\bxi_2)\ldots \d\sigma(\bxi_d) = \sum_{\substack{\bxi_1  + \ldots + \bxi_d = \bzeta \\ \bxi_i \in \Gamma}}\ 1\,,$$
and  ${\bm \delta}$ denotes the normalized Dirac delta in $\F_q^{d\ast}$  (i.e., ${\bm \delta}(0) = q^d$ and ${\bm \delta}(\bxi) = 0$ if $\bxi \neq 0$). Note that $\sigma^{\ast d}$ may take as many values as the distinct values of $\bell!$, and this large number of possibilities is generally regarded as  an obstacle when investigating the sharp inequality. To put this in perspective, note that the corresponding $k$-fold convolutions of the corresponding measures for the paraboloids and cones studied in \cite{CGRDOS}  take only two non-zero values in general (or three in one very particular instance).}

\smallskip

Our next order of business is to understand the cardinality of each set $W_{\bell}$. For this, recall that we defined $b_j(\bell) := | \{ 1 \leq i \leq d \ : \ \ell_i = j\}|$ for $j \in\{ 1,2,\ldots,d\}$, and $b_0(\bell) := | \{ 1 \leq i \leq d \ : \ \ell_i = 0\}| + (q-d)$. Assume that $\ell_1 \geq \ell_2 \geq \ldots \geq \ell_k \geq 1$ and $\ell_{k+1} = \ldots = \ell_d = 0$. In this case,  $b_0(\bell) = q-k$. We want to count the number of polynomials $P \in \F_q[X]$ of degree $d$ that factor completely as
$$
P(X) = \prod_{i =1}^k (X - r_i)^{\ell_i}.
$$
We have $q$ possibilities for root $r_1$, then $(q-1)$ possibilities for root $r_2$, and so on, up to $(q-k+1)$ possibilities for the root $r_k$. We must, however, divide by $b_1(\bell)! \,b_2(\bell)! \ldots b_d(\bell)!$ to discount for permutations of roots that have the same multiplicity (after all, they yield the same polynomial). This leads us to conclude that  
\begin{equation}\label{20250527_20:09}
|W_{\bell}| = \frac{q(q-1)\ldots(q-k+1)}{b_1(\bell)! \,b_2(\bell)! \ldots b_d(\bell)!} =  \frac{q!}{b_0(\bell)!\,b_1(\bell)! \,b_2(\bell)! \ldots b_d(\bell)!} = \binom{q}{\bb(\bell)}.
\end{equation}

\smallskip

Note that in the sum on the left-hand side of \eqref{20250527_18:14} we only have to consider the points $\bzeta$ that belong to the support of the $d$-fold convolution $\sigma^{*d}$, i.e. $\bzeta \in \cup_{\bell \in \P_d} W_{\bell}$. If $\bzeta$ belongs to a particular $W_{\bell}$, note that the product $\prod_{i=1}^d f(\bxi_i)$ is constant over all the $\binom{d}{\bell}$ possibilities of $\bxi_i \in \Gamma$ with $\bxi_1  + \ldots + \bxi_d = \bzeta$ (since these possibilities are just certain permutations of each other); let us denote this product by  $\pi_{\bzeta}(f)$. Hence, for the term on the left-hand side of \eqref{20250527_18:14}, we have 
\begin{align*}
\sum_{\bzeta \in \F_q^{d}} \left| \sum_{\substack{\bxi_1  + \ldots + \bxi_d = \bzeta \\ \bxi_i \in \Gamma}}\   \prod_{i=1}^d f(\bxi_i)\right|^2 = \sum_{\bell \in \P_d} \, \sum_{\bzeta \in W_{\bell}} \left| \binom{d}{\bell} \pi_{\bzeta}(f)\right|^2.
\end{align*}
Similarly, the combinatorics of expanding the $d$-th power in the right-hand side of \eqref{20250527_18:14} yields
\begin{align*}
\left( \sum_{\bxi \in \Gamma} |f(\bxi)|^2 \right)^d  = \sum_{\bell \in \P_d} \, \sum_{\bzeta \in W_{\bell}}\binom{d}{\bell} \left| \pi_{\bzeta}(f)\right|^2.
\end{align*}
Hence, our inequality \eqref{20250527_18:14} is equivalent to 
\begin{align}\label{20250527_18:52}
\sum_{\bell \in \P_d} \, \sum_{\bzeta \in W_{\bell}} \left| \binom{d}{\bell} \pi_{\bzeta}(f)\right|^2 \leq  {\bf C}^{2d} \sum_{\bell \in \P_d} \, \sum_{\bzeta \in W_{\bell}}\binom{d}{\bell} \left| \pi_{\bzeta}(f)\right|^2. 
\end{align}

\smallskip

We conclude this subsection with two remarks. Firstly, note that only $|f|$ appears in \eqref{20250527_18:52}. Secondly, if one believes that $f \equiv 1$ is a maximizer, then one can guess the value of the optimal constant ${\bf C}^{2d}$ in \eqref{20250527_18:52}. In fact, when $f \equiv 1$, recalling that the right-hand sides of \eqref{20250527_18:14} and \eqref{20250527_18:52} coincide, we find
\begin{align}\label{20250527_22:01}
\sum_{\bell \in \P_d} \binom{d}{\bell} |W_{\bell}| = q^d\,,
\end{align}
and so the optimal constant ${\bf C}^{2d}$ must then be equal to 
\begin{align}\label{20250527_20:57}
{\bf A} := \frac{1}{q^d} \sum_{\bell \in \P_d} \binom{d}{\bell}^2 |W_{\bell}|.
\end{align}
Thus, by \eqref{20250527_20:09} we are led to Conjecture \ref{Conj1} (modulo the classification of maximizers).

\subsection{Symmetric sums and Muirhead's inequality} We now want to rewrite \eqref{20250527_18:52} in terms of symmetric sums. We would like to do it in such a way that all symmetric sums are normalized, which in this case means that they have the same number of terms. Let $x_1, x_2, \ldots, x_q$ be $q$ variables and let $S_q$ be the group of permutations of $\{1,2,\ldots, q\}$. Associated to a partition $\bell = (\ell_1, \ell_2, \ldots, \ell_d) \in \P_d$, we define the $\bell$-th symmetric sum $\Sigma_{\bell}$ as
\begin{align}\label{20250528_05:50}
\Sigma_{\bell} := \sum_{\tau \in S_q} x_{\tau(1)}^{\ell_1}x_{\tau(2)}^{\ell_2}\ldots x_{\tau(d)}^{\ell_d}.
\end{align}
Note that, by definition, the symmetric sum $\Sigma_{\bell}$ has $q!$ terms. As an example, if $q=5$, $d=3$ and $\bell = (2,1,0)$, we would have
$$
\Sigma_{(2,1,0)} = \sum_{\substack{1 \leq i, j \leq 5 \\ i\neq j}}   6x_i^2 x_j.
$$
Returning to our setup, the $q$ variables will be $\{|f(\bxi)|^2\, : \bxi \in \Gamma\}$. A counting argument (consider $f \equiv 1$ in order to find the multiplicative constant) yields
\begin{align}\label{20250527_19:50}
 \sum_{\bzeta \in W_{\bell}} \left| \pi_{\bzeta}(f)\right|^2 = \frac{|W_{\bell}|}{q!} \Sigma_{\bell}.
\end{align}

\smallskip

In light of \eqref{20250527_18:52}, \eqref{20250527_20:57} and \eqref{20250527_19:50}, the desired inequality  becomes 
\begin{align*}
\sum_{\bell \in \P_d} \binom{d}{\bell}^2 |W_{\bell}| \, \Sigma_{\bell} \leq {\bf A} \sum_{\bell \in \P_d} \binom{d}{\bell} |W_{\bell}| \, \Sigma_{\bell}.
\end{align*}
This can be rewritten as
\begin{align}\label{20250527_22:29}
\sum_{\bell \in \P_d} \omega_{\bell} \, \Sigma_{\bell}\geq 0\,,
\end{align}
where the weight $\omega_{\bell}$ is defined by 
$$
\omega_{\bell} := \left( {\bf A} - \binom{d}{\bell}\right) \binom{d}{\bell}  \,|W_{\bell}|.
$$

\smallskip

We make two remarks. Firstly, from \eqref{20250527_22:01} and \eqref{20250527_20:57}, we have
\begin{equation}\label{20250527_22:33}
 \sum_{\bell \in \P_d} \omega_{\bell} = 0.
\end{equation}
Secondly, $\omega_{\bell} <0$ if and only if
 \begin{equation}\label{20250527_22:36}
 \bell ! := \ell_1!\, \,\ell_2!\,\ldots\,\ell_d! < \frac{d!}{{\bf A}}.
 \end{equation}
We proceed to decompose $\P_d$ into two disjoint subsets, 
\begin{equation}\label{eq_defNP}
\P_d = N \cup P\,,
\end{equation}
where a partition $\n = (n_1, \ldots, n_d)$ belongs to $N$ if $\omega_{\n} <0$, and a partition $\p = (p_1, \ldots, p_d)$ belongs to $P$ if $\omega_{\p} \geq 0$. Inequality \eqref{20250527_22:29} can be rewritten as
\begin{equation}\label{20250528_15:50}
\sum_{\n \in N} (-\omega_{\n}) \, \Sigma_{\n} \leq \sum_{\p \in P} \omega_{\p} \, \Sigma_{\p}. 
\end{equation}
In this way, \eqref{20250528_15:50} amounts to an inequality between sums of weighted symmetric sums, both of which have non-negative weights. In light of \eqref{20250527_22:33}, the total weight on the left-hand side is the same as on the right-hand side. According to \eqref{20250527_22:36}, the partitions in $N$ have the lowest values of $\bell!$. 

\smallskip

In the set of integer partitions, there is a notion of partial order that is suitable for our purposes. Given two partitions $\bell  = (\ell_1, \ell_2, \ldots, \ell_d)$ and $\bell'  = (\ell_1', \ell_2', \ldots, \ell_d')$ we say that $\bell \leq \bell'$ if the sequence of partial sums of $\bell$ is always majorized by the sequence of partial sums of $\bell'$, that is, 
\begin{align*}
\ell_1 + \ldots + \ell_k \leq \ell_1' + \ldots + \ell_k'
\end{align*}
for each $1 \leq k \leq d$. This is known as the {\it dominance order}. One can show that, under our standing hypothesis that the coordinates of each $\bell$ are decreasing,
\begin{equation*}
\bell \leq \bell' \Rightarrow  \bell ! \leq (\bell') !. 
\end{equation*}
We remark that  the converse does not necessarily hold, since two partitions might not be comparable (e.g. take $\bell = (4,1,1)$ and $\bell'= (3,3,0)$ in $\mathcal P_6$).

\smallskip

The following general inequality between symmetric sums arises naturally as a tool to be used in the present situation. In what follows, if $\bell = (\ell_1, \ldots, \ell_d) \in \P_d$, we set $\rho(\bell) := | \{ 1 \leq i \leq d \ : \ \ell_i >0\}|$. Note that if $\bell \leq \bell'$, then $\rho(\bell) \geq \rho(\bell')$. 

\begin{lemma}[Muirhead's inequality \cite{Muirhead Thesis}]\label{Muirhead}
Let $q \in \N$ and $x_1, x_2, \ldots, x_q$ be non-negative real numbers. For each $\bell \in \P_d$, let the symmetric sum $\Sigma_{\bell}$ be defined as in \eqref{20250528_05:50}. If $\bell \leq \bell'$ then 
\begin{equation}\label{20250528_06:14}
\Sigma_{\bell} \leq \Sigma_{\bell'}\,,
\end{equation}
with equality if and only if at least one the following conditions holds: 
\begin{itemize}
\item[(i)] $\bell = \bell'$\,$;$
\item[(ii)] $x_1 = x_2 = \ldots =x_q$\,$;$ 
\item[(iii)] At least $(q-\rho(\bell')+1)$ of the numbers $\{x_i\}_{i=1}^q$ are zero, in which case both sides of \eqref{20250528_06:14} are zero\,$;$
\item[(iv)]  $\rho(\bell) = \rho(\bell')$, with $k$ of the numbers $\{x_i\}_{i=1}^q$ being zero for a certain $k \leq q - \rho(\bell')$, and the remaining $(q-k)$ positive numbers being all equal.
\end{itemize}
\end{lemma}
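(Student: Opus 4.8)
The plan is to follow the classical route to Muirhead's inequality: reduce the $d$-variable statement to the comparison of two symmetric sums whose exponent partitions differ by a single ``box move'', settle that comparison by a two-variable convexity inequality, and then sieve out the equality cases (i)--(iv) by tracking when such a move is strict. For the reduction, I would invoke the classical fact about the dominance order on partitions (Hardy--Littlewood--P\'olya; equivalently, Brylawski's description of the covering relations in the partition lattice) that $\bell\le\bell'$ in $\P_d$ forces the existence of a chain $\bell=\bell^{(0)}\le\bell^{(1)}\le\cdots\le\bell^{(m)}=\bell'$ in which each consecutive pair differs by an \emph{elementary transfer}: viewing exponent vectors as multisets, $\bell^{(t+1)}=\{A,B\}\cup M$ and $\bell^{(t)}=\{A-1,B+1\}\cup M$ for some integers $A\ge B+2$ and some multiset $M$. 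Since a permutation of the exponents $\ell_1,\dots,\ell_d$ can be absorbed into $\tau$, the value $\Sigma_{\bell}$ depends only on $\bell$ as a partition, so it suffices to compare $\Sigma_{\bell^{(t)}}$ with $\Sigma_{\bell^{(t+1)}}$ at each step.

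For a fixed transfer as above, I would group the terms of $\Sigma_{\bell^{(t+1)}}-\Sigma_{\bell^{(t)}}$ according to which $q-2$ variables are assigned, and in which order, to the slots carrying the exponents of $M$ and to the zero-exponent slots. Each group contributes a non-negative coefficient $\prod_c x_c^{m_c}$ times the difference
\[
\big(x_a^A x_b^B + x_a^B x_b^A\big)-\big(x_a^{A-1}x_b^{B+1}+x_a^{B+1}x_b^{A-1}\big),
\]
$\{x_a,x_b\}$ being the surviving pair. When $x_b>0$ this difference equals $x_b^{A+B}(r-1)r^B(r^{A-1-B}-1)$ with $r=x_a/x_b$, which is $\ge 0$ because $r-1$ and $r^{A-1-B}-1$ share the same sign; the case where one of the two variables vanishes is checked by inspection. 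Hence $\Sigma_{\bell^{(t)}}\le\Sigma_{\bell^{(t+1)}}$, and summing over the chain yields \eqref{20250528_06:14}.

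For the equality cases, note first that $\Sigma_{\bell'}=0$ precisely when fewer than $\rho(\bell')$ of the $x_i$ are nonzero, i.e., at least $q-\rho(\bell')+1$ of them vanish; this is case (iii), and then $\Sigma_\bell=0$ too since $\rho(\bell)\ge\rho(\bell')$. So assume $\bell\ne\bell'$, that (iii) fails (hence $\Sigma_{\bell'}>0$), and $\Sigma_\bell=\Sigma_{\bell'}$; then equality holds at every step of the chain. Fix a step. As $\Sigma_{\bell^{(t)}}=\Sigma_{\bell'}>0$, at least $\rho(\bell^{(t)})$ of the $x_i$ are positive, so for any prescribed positive pair $x_a,x_b$ one can build an admissible assignment placing them in the two distinguished slots and positive variables in all remaining positive-exponent slots; its coefficient is then positive, forcing the displayed difference to vanish. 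But for $x_a,x_b>0$ with $x_a\ne x_b$ and $A\ge B+2$, the factorization shows that difference is \emph{strictly} positive. Hence all nonzero $x_i$ are equal, say to $v>0$. If there are no zeros, this is case (ii). If exactly $k\ge1$ of the $x_i$ vanish, put $P=q-k\ge\rho(\bell)$; then $\Sigma_\bell=v^d\,\frac{P!}{(P-\rho(\bell))!}\,(q-\rho(\bell))!$ and likewise for $\bell'$, and equating these with $P<q$ forces $\rho(\bell)=\rho(\bell')$, which is case (iv), the side condition $k\le q-\rho(\bell')$ being a consequence of $P\ge\rho(\bell^{(t)})\ge\rho(\bell')$. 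The converse, that each of (i)--(iv) produces equality, is immediate from the same evaluations.

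The inequality itself is routine once the transfer reduction is granted, so I would cite the generation of the dominance order by single-box moves rather than reprove it. The genuine work is the equality analysis: under the standing hypothesis $\Sigma_{\bell'}>0$, one must exhibit an explicit admissible assignment that witnesses strict inequality whenever two positive coordinates differ, and then carry out the elementary but delicate counting that separates (ii), (iii) and (iv)---including the degenerate sub-case $B=0$, where the two-variable difference collapses to $(x_a^{A-1}-x_b^{A-1})(x_a-x_b)$ and must be treated separately.
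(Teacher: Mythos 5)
Your argument is correct, but it follows a genuinely different route from the paper for the part the paper actually writes out. The paper takes the inequality and the equality cases (i)--(ii) for \emph{strictly positive} variables as known (citing Muirhead / Hardy--Littlewood--P\'olya), passes to non-negative variables by continuity, and then obtains the new cases (iii)--(iv) by a short reduction: after discarding the $k$ vanishing variables one has $\Sigma_{\bell} = \frac{(q-\rho(\bell))!}{(q-k-\rho(\bell))!}\,\widetilde\Sigma_{\bell}$, where $\widetilde\Sigma_{\bell}$ is the symmetric sum in the positive variables alone, so $\Sigma_{\bell}\le\Sigma_{\bell'}$ decouples into a comparison of combinatorial prefactors (equal iff $\rho(\bell)=\rho(\bell')$) and an application of the positive-variable equality case to $\widetilde\Sigma_{\bell}\le\widetilde\Sigma_{\bell'}$. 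You instead reprove the full inequality from the transfer-chain generation of the dominance order (itself cited, which is a fair parallel to the paper's citation of the positive-variable Muirhead), and you extract the equality cases from strictness of a single transfer step together with the evaluation $\Sigma_{\bell}=v^{d}\,\frac{P!}{(P-\rho(\bell))!}\,(q-\rho(\bell))!$ when all positive variables equal $v$ --- which is the paper's identity specialized to equal variables, and your derivation of $\rho(\bell)=\rho(\bell')$ and of the side condition $k\le q-\rho(\bell')$ from $\Sigma_{\bell'}>0$ matches the paper's. What your version buys is self-containment: it does not presuppose the equality characterization in the positive-variable case, and the ``exhibit an assignment with positive coefficient'' step correctly uses $\Sigma_{\bell^{(t)}}>0$ to guarantee enough positive variables. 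What it costs is redoing the classical strictness analysis (including the $B=0$ degeneration), which the paper's factorization sidesteps entirely.
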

\noindent {\sc Remarks:} In Lemma~\ref{Muirhead}, $q$ can be any natural number (not necessarily a power of a prime). Muirhead's inequality (also found in \cite[pp.~44--45]{HLP88}) is usually stated for positive numbers $x_1,\ldots,x_q$, with the equality cases being (i) and (ii). We arrive at the inequality in the case of non-negative $x_j$'s by continuity, but the equality cases (iii) and (iv) are a bit more subtle and, as we have not been able to find a reference in the literature, we provide a brief proof.  

\begin{proof}[Proof of Lemma~\ref{Muirhead} \textup{(}cases of equality \textup{(iii)} and \textup{(iv))}]
Consider a non-negative sequence $x_1,\ldots,x_q$. As remarked above, we may assume that some, but not all, of the $x_j$'s are equal to zero.  By symmetry, we may further assume that $x_1,\ldots,x_{q-k}$ are positive, and that $x_{q-k+1} = \cdots = x_q=0$.  

\smallskip

Observe that if $\Sigma_{\bell'} = 0$, then $\Sigma_\bell=0$ as well, and the former identity  holds if and only if each summand in $\Sigma_{\bell'}$ equals zero.  Moreover, the summands of $\Sigma_{\bell'}$ are all zero if and only if $q-k<\rho(\bell')$, i.e., (iii) holds. 

\smallskip

Finally, consider the case $q-k \geq \rho(\bell')$, so that $\Sigma_{\bell'} \neq 0$.  Of course, $\Sigma_\bell=
\Sigma_{\bell'}$ can only hold if $\Sigma_\bell \neq 0$, so we may assume that $q-k \geq \rho(\bell)$ as well. Define
$$
\widetilde\Sigma_\bell:=\sum_{\tau \in S_{q-k}} x^{\ell_1}_{\tau(1)} x_{\tau(2)}^{\ell_2} \ldots x_{\tau(d)}^{\ell_d}
$$
(with $x_{\tau(j)}^{\ell_j}$ interpreted as 1 if $\rho(\bell)<j\leq d$),
which involves only positive $x_j$'s.  
By standard combinatorial arguments,  
$$
\Sigma_\bell = \frac{(q-\rho(\bell))!}{(q-k-\rho(\bell))!} \widetilde \Sigma_\bell = (q-\rho(\bell))  \ldots  (q-k-\rho(\bell)+1) \widetilde\Sigma_\bell,
$$
and, since $\rho(\bell) \geq \rho(\bell')$, 
$$
\frac{(q-\rho(\bell))!}{(q-k-\rho(\bell))!} \leq \frac{(q-\rho(\bell'))!}{(q-k-\rho(\bell'))!} ,
$$
with equality if and only if $\rho(\bell) = \rho(\bell')$.  Applying Muirhead's inequality in the case of positive $x_j$, we arrive at the final case of equality, condition (iv).  
\end{proof}

\subsection{Fractional matching} 

\subsubsection{The strategy} \label{strategy}
With \eqref{20250528_15:50} in mind, we seek to find, for each pair $(\n, \p) \in N \times P$, a non-negative weight $\tau_{\n \p}$ such that:
\begin{itemize}
\item[(i)] If $\n$ and $\p$ are non-comparable (with respect to the partial dominance order), then $\tau_{\n \p}= 0$. 
\item[(ii)] For each $\n \in N$, setting $P_{\n} := \{ \p \in P\, :\, \n \leq \p\}$, we have
\begin{align}\label{20250529_21:03}
(-\omega_{\n}) = \sum_{\p \in P_{\n}} \tau_{\n \p}.
\end{align}
\item[(iii)] For each $\p \in P$, setting $N_{\p} := \{ \n \in N\, :\, \n \leq \p\}$, we have
\begin{align}\label{20250529_14:28}
\omega_{\p} = \sum_{\n \in N_{\p}} \tau_{\n \p}.
\end{align}
\end{itemize}
Once we succeed in this task, the sharp inequality \eqref{20250528_15:50} can then be established using \eqref{20250528_06:14}--\eqref{20250529_14:28} as follows:
\begin{align}\label{20250529_14:32}
\sum_{\n \in N} (-\omega_{\n}) \, \Sigma_{\n} = \sum_{\n \in N}  \sum_{\p \in P_{\n}} \tau_{\n \p} \, \Sigma_{\n} \leq \sum_{\n \in N}  \sum_{\p \in P_{\n}} \tau_{\n \p} \, \Sigma_{\p} = \sum_{\p \in P}  \sum_{\n \in N_{\p}} \tau_{\n \p} \, \Sigma_{\p} = \sum_{\p \in P}  \omega_{\p}\Sigma_{\p}.
\end{align}

\smallskip

For the case of equality, note that the partition $\p = (d,0,\ldots, 0) \in P$ has $\omega_{\p} >0$ and majorizes all the other partitions in $\P_d$; thus, in particular, it majorizes all the partitions in $N$. Hence, from \eqref{20250529_14:28}, there exists at least one partition $\n \in N$ such that $\tau_{\n\p} >0$. For this particular pair, inequality $\tau_{\n \p} \, \Sigma_{\n} \leq \tau_{\n \p} \, \Sigma_{\p}$ was invoked in \eqref{20250529_14:32} and we see from Lemma \ref{Muirhead} that equality occurs in this case if and only if all the variables (which are $\{|f(\bxi)|^2\, : \bxi \in \Gamma\}$ in our setup) are equal.

\subsubsection{An example} Let us show how to directly settle the case of the moment curve in $d=3$ for general $q=p^n$, $p>3$ prime and $n\geq 1$. In this case, one has $\P_3 = \{(1,1,1), (2,1,0), (3,0,0)\}$. We then compute
\begin{align*}
|W_{(1,1,1)}|= \frac{q(q-1)(q-2)}{6} \ \ ; \ \ |W_{(2,1,0)}|= q(q-1) \ \ ; \ \  |W_{(3,0,0)}|= q.
\end{align*}
Then 
$$
{\bf A} = \frac{6q^3 -9q^2 + 4q}{q^3} = 6 -\frac{9}{q} + \frac{4}{q^2},
$$
and
\begin{align*}
\omega_{(1,1,1)} = \frac{(-9q+4)(q-1)(q-2)}{q} \  ;  \ \omega_{(2,1,0)} = \frac{3(3q^2 - 9q + 4)(q-1)}{q} \  ;  \ \omega_{(3,0,0)} = \frac{5q^2 - 9q + 4}{q}. 
\end{align*}
Note that $\omega_{(1,1,1)} + \omega_{(2,1,0)}  + \omega_{(3,0,0)} = 0$ and, since $q>3$, we have 
$$
\omega_{(1,1,1)} <0 \ \ ; \ \ \omega_{(2,1,0)} >0 \ \ ; \ \ \omega_{(3,0,0)}>0.
$$
Since $(1,1,1) \leq (2,1,0) \leq (3,0,0)$, our fractional matching is accomplished by setting 
\begin{align*}
\tau_{(1,1,1)(2,1,0)} := \omega_{(2,1,0)} \ \ {\rm and} \ \ \tau_{(1,1,1)(3,0,0)} := \omega_{(3,0,0)}. 
\end{align*}
This concludes the proof of Theorem \ref{Thm1} in the case $d=3$. 

\subsubsection{Strassen's theorem} It turns out that the possibility  of accomplishing the fractional matching construction proposed in \S \ref{strategy} is a well-understood situation in combinatorics, in the context of bipartite graphs. This is characterized by Strassen's theorem \cite{Str65}, which we now describe. In what follows let $G = (V,E,w)$ be a weighted graph, where $V$ is the set of vertices, $E$ is the set of (undirected) edges, and each $v \in V$ is assigned a weight $w(v)$. For any subset $U \subset V$ we set $w(U) := \sum_{v \in U} w(v)$. Recall that a bipartite graph is a graph in which the vertices can be partitioned into two sets $\{A, B\}$ such that all edges have one endpoint in $A$ and the other endpoint in $B$.

\begin{lemma}[Strassen’s theorem] \label{Strassen}
Let $G = (V, E, w)$ be a weighted bipartite graph with bipartition $\{A, B\}$ such that $w(A) = w(B)$. Then the following conditions are equivalent:
\begin{itemize}
\item[(i)] For all $U \subset A$ we have $w(U) \leq w({\mc N}(U))$, where ${\mc N}(U)$ denotes the set of neighbors of $U$, i.e. the set of all vertices in $B$ that are connected by an edge to some vertex in $U$.

\smallskip

\item[(ii)] There exists an edge weight function $\widetilde{w}:E \to [0,\infty)$ such that, for all $x \in V$, we have $w(x) = \sum_{e \sim x} \widetilde{w}(e)$, where the sum is taken over all edges $e$ incident to $x$.
\end{itemize}
\end{lemma}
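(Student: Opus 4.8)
The plan is to establish the two implications separately, and to work throughout under the hypothesis that the vertex weights are nonnegative, $w\colon V\to[0,\infty)$ — as is the case in our application, where the vertices of the two parts carry the weights $\{-\omega_{\n}\}_{\n\in N}$ and $\{\omega_{\p}\}_{\p\in P}$. This hypothesis is genuinely needed (the stated equivalence already fails for a single edge with both endpoint weights equal to $-1$), so I would make it explicit at the outset.

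The implication (ii) $\Rightarrow$ (i) is routine. Given an admissible edge weight function $\wt w$ and a set $U\subset A$, bipartiteness forces every edge meeting $U$ to have its other endpoint in $\mc N(U)$ and to meet $A$ in exactly one vertex, so $w(U)=\sum_{e\,:\,e\cap U\neq\emptyset}\wt w(e)$; since $\{e:e\cap U\neq\emptyset\}\subset\{e:e\cap\mc N(U)\neq\emptyset\}$ and $\wt w\geq 0$, summing the weights over the larger edge set produces $w(\mc N(U))$, whence $w(U)\leq w(\mc N(U))$.

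The content is in (i) $\Rightarrow$ (ii). Existence of $\wt w$ is precisely the feasibility of the linear system $M\wt w=w$, $\wt w\geq 0$, where $M$ is the vertex--edge incidence matrix of $G$. I would invoke Farkas' lemma (the theorem of the alternative): if this system were infeasible, there would exist $y\colon V\to\R$ with $y_a+y_b\geq 0$ on every edge $\{a,b\}\in E$ and $\sum_{v}w(v)y_v<0$, and the task is to rule this out. First I would exploit $w(A)=w(B)$: replacing $y_v$ by $y_v+c$ for $v\in A$ and by $y_v-c$ for $v\in B$ preserves all the edge inequalities and leaves $\sum_v w(v)y_v$ unchanged, so after choosing $c$ sufficiently negative one may assume $y<0$ on $A$ and $y>0$ on $B$. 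Then for each $s>0$ set $U_s:=\{a\in A:y_a\leq-s\}$ and $W_s:=\{b\in B:y_b\geq s\}$; the edge inequality gives $\mc N(U_s)\subset W_s$, hence $w(U_s)\leq w(\mc N(U_s))\leq w(W_s)$ by condition (i) and $w\geq 0$. Integrating this inequality over $s\in(0,\infty)$ and applying the layer-cake formula — legitimate precisely because $y<0$ on $A$ and $y>0$ on $B$ — converts it into $-\sum_{a\in A}w(a)y_a\leq\sum_{b\in B}w(b)y_b$, that is $\sum_v w(v)y_v\geq 0$, contradicting the Farkas certificate. Hence the system is feasible and (ii) holds; the verification chain \eqref{20250529_14:32} then delivers the sharp inequality \eqref{20250528_15:50}.

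I expect the main obstacle to be this last implication, and within it the crux is to pair the shift trick — which is what makes the hypothesis $w(A)=w(B)$ pay off — with the layer-cake decomposition that turns the single global inequality hidden in a Farkas certificate into the family of local Hall-type inequalities supplied by (i). An equivalent and perhaps more transparent route, which I would likely record as a remark, is to phrase (ii) as a max-flow problem: adjoin a source $s$ with arcs $s\to a$ of capacity $w(a)$, retain each edge of $G$ as an arc $A\to B$ of infinite capacity, and adjoin a sink $t$ with arcs $b\to t$ of capacity $w(b)$; then (ii) is the existence of a flow of value $w(A)$ saturating all terminal arcs, and max-flow--min-cut identifies the obstructions with the finite cuts, which are in bijection with the sets $U\subset A$ and have capacity $w(A)-w(U)+w(\mc N(U))$, so that condition (i) says exactly that every cut has capacity at least $w(A)$. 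Since the lemma is in any case a finite special case of Strassen's marginal theorem, one could alternatively just invoke \cite{Str65}; I would include the self-contained argument above for completeness.
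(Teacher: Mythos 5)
Your argument is correct, but note that the paper does not actually prove Lemma~\ref{Strassen}: it is quoted as a known result with a pointer to \cite{Str65}, where it arises as a finite, discrete instance of Strassen's theorem on the existence of measures with prescribed marginals. So there is nothing in the text to compare against step by step; what you have supplied is a self-contained proof that the paper omits. Both of your routes for (i)$\Rightarrow$(ii) are sound: the Farkas certificate, neutralized by the shift trick (which is exactly where $w(A)=w(B)$ is used) and then converted into the Hall-type inequalities of (i) via the layer-cake decomposition; and the max-flow/min-cut reformulation, which is essentially the fractional (real-capacity) version of Hall's theorem and is arguably the cleanest thing to record if one wanted a proof in an appendix. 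Your one substantive observation deserves emphasis: the equivalence as literally stated requires $w\geq 0$ (your one-edge example with both endpoint weights equal to $-1$ satisfies (i) but not (ii)), a hypothesis the statement leaves implicit. It is harmless in the application, since the bipartition $\P_d=N\cup P$ of \eqref{eq_defNP} is defined precisely so that the vertex weights $-\omega_{\n}$ for $\n\in N$ and $\omega_{\p}$ for $\p\in P$ are nonnegative, but it would be worth making explicit in the hypotheses of the lemma.
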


\subsubsection{Reduction to finitely many checks} Our situation plainly falls under the scope of Lemma \ref{Strassen}, with the vertices being the elements of $\P_d$, and the bipartition of the vertices given by $\P_d = N \cup P$. The weight of a given $\n \in N$ is $-\omega_{\n}$, and the weight of a given $\p \in P$ is $\omega_{\p}$ (hence both sets $N$ and $P$ have the same total weight). We declare that there is an edge connecting $\n \in N$ and $\p \in P$ if and only if $\n \leq \p$. Therefore, by Lemma \ref{Strassen}, our strategy boils down to checking a finite number of inequalities, namely: for any $U \subset N$, we must have
\begin{align}\label{20250529_21:12}
\sum_{\n \in U} (-\omega_{\n}) \leq \sum_{\p \, \in \underset{\n \in U}{\bigcup}  P_{\n}} \omega_{\p}.
\end{align}

\subsection{Computational part} For each given $d$ and $q$, the success of our strategy depends on \eqref{20250529_21:12}, and can be verified in finite time. For a fixed dimension $d$, if one wants to establish the result for all $q$, we need a tool to prove it for large $q$, i.e. for $q \geq c(d)$, and hence the success of our strategy is reduced again to the verification of \eqref{20250529_21:12} for $d < q < c(d)$. Such a tool is established in Theorem \ref{Thm2}, which we prove in the following subsection. The computational verification of Theorem \ref{Thm1} is then described in Appendix \ref{appendix}.

\subsection{Asymptotic methods} In this subsection, we prove Theorem \ref{Thm2}. The main idea is the following: looking at \eqref{20250527_22:36}, if $d!/{\bf A} \leq 6$, then the set $N$ will contain at most three partitions, namely $(1,1,\ldots, 1), (2,1,\ldots, 1,0)$ and $(2,2,1,\ldots, 1,0,0)$. Observe that 
$$
(1,1,\ldots, 1) \leq (2,1,\ldots, 1,0) \leq (2,2,1,\ldots, 1,0,0)\,,
$$
and that these three partitions are less than or equal to any other of the remaining partitions. Hence, in this case, each partition in $N$ will be less than or equal to each partition in $P$, and \eqref{20250529_21:12} clearly holds.

\smallskip

In light of \eqref{20250527_20:57}, the condition $d!/{\bf A} \leq 6$ is equivalent to 
\begin{align}\label{20250529_21:41}
\frac{d!}{6}\, q^d \leq \sum_{\bell \in \P_d} \binom{d}{\bell}^2 |W_{\bell}|.
\end{align}
We shall consider only one summand on the right-hand side of \eqref{20250529_21:41}, namely, the one associated to the partition $\bell = (1,1,\ldots, 1)$ (note that this is the only one of order $q^d$). Hence \eqref{20250529_21:41} follows from
\begin{align*}
\frac{d!}{6}\, q^d \leq (d!)^2 \big|W_{(1,1,\ldots,1)}\big| = d!\, q(q-1)\ldots(q-d+1)\,,
\end{align*}
where we have used \eqref{20250527_20:09}. In turn, this is equivalent to 
$$
\frac{1}{6} \leq 1 \left(1 - \frac{1}{q}\right) \left(1 - \frac{2}{q}\right) \ldots \left(1 - \frac{d-1}{q}\right)\,,
$$
and taking logarithms, we get 
\begin{align}\label{20250529_22:14}
\log 6 \geq - \sum_{j=1}^{d-1}\log \left(1 - \frac{j}{q}\right). 
\end{align}

We may assume that $q \geq 2d$. Using that 
$$
x + x^2 \geq - \log (1 - x)
$$
for $0 \leq x \leq 1/2$, inequality \eqref{20250529_22:14} follows from 
\begin{align*}
\log 6 \geq \sum_{j=1}^{d-1} \left(\frac{j}{q}\right) + \sum_{j=1}^{d-1} \left(\frac{j}{q} \right)^2 = \frac{d(d-1)}{2q} +  \frac{(d-1)d(2d-1)}{6q^2}\,,
\end{align*}
which is equivalent to the quadratic equation
\begin{align}\label{20250529_22:20}
6(\log 6) q^2 - 3d(d-1)q - (d-1)d(2d-1) \geq 0.
\end{align}
Inequality \eqref{20250529_22:20} is verified if
\begin{equation}\label{20250529_22:26}
q \geq \frac{3d(d-1) \left(1 + \sqrt{1 + \frac{8\log6}{3} \frac{(2d-1)}{d(d-1)}}\right)}{12 \log 6}.
\end{equation}
Using the fact that $\sqrt{1 +x} \leq 1 + \frac{x}{2}$ for any $x\geq 0$, inequality \eqref{20250529_22:26} then follows from 
\begin{align*}
q \geq \frac{3d(d-1) \left(2 + \frac{4\log6}{3} \frac{(2d-1)}{d(d-1)}\right) }{12 \log 6} = \frac{d(d-1)}{2 \log 6} + \frac{(2d-1)}{3}.
\end{align*}
This completes the proof of Theorem \ref{Thm2}.

\section{Concluding remarks}

\subsection{On  computational complexity} \label{Sec3.1_comp} The Hardy--Ramanujan formula \cite{HR} provides an asymptotic formula for the number of partitions in $\P_{d}$:
\begin{equation}\label{20250805_11:29}|\P_{d}| \sim \frac{1}{4d\sqrt{3}}e^{\pi \sqrt{2d/3}},
\text{ as } d \to \infty.
\end{equation}
 The computational work needed to determine the weights $\omega_{\bell}$ thus grows as in \eqref{20250805_11:29}. However,   a complete computation is not always necessary, as shown in the proof of Theorem \ref{Thm2}. In that regime, the size of the set $N = \{\bell \in \P_d\, : \, \omega_{\bell} <0\}$ is small, which allowed for a direct verification  of \eqref{20250529_21:12}. 

\smallskip

In this subsection, we briefly explain why the size of the power set of $N$ (i.e., the number of conditions to be checked in \eqref{20250529_21:12}) increases rapidly  as $d$ increases and $q$ remains close to $d$. Even though in our setup the smallest possible value of $q$ equals $d+1$, in the asymptotic argument below it is harmless to assume, as we shall, that  $q=d$. From \eqref{20250527_20:57} we then have
\begin{align}\label{20250802_10:21}
{\bf A} = \frac{1}{d^d} \sum_{\bell \in \P_d} \binom{d}{\bell}^2 |W_{\bell}|.
\end{align}
We seek an upper bound for ${\bf A}$. Split $\P_d$ into two disjoint subsets, $\P_d = I_{d,k} \cup I_{d,k}^\complement$, where $I_{d,k}$ denotes the set of partitions with a block of at least $k$ consecutive ones ($k$ will be chosen later). By removing the final $k$ ones in this block,  the partitions in $I_{d,k}$ are in bijective correspondence with the partitions in $\P_{d-k}$. Using  $\binom{d}{\bell} \leq d!$ and $|W_{\bell}| \leq \frac{d!}{k!} \leq d^{d-k} $ for each $\bell \in I_{d,k}$, we have
\begin{equation}\label{20250804_14:41}
\frac{1}{d^d} \sum_{\bell \in I_{d,k}} \binom{d}{\bell}^2 |W_{\bell}| \leq \frac{(d!)^2}{d^d} \cdot d^{d-k} \cdot |\P_{d-k}|.
\end{equation}
On the other hand, if $\bell \in I_{d,k}^\complement$, note that 
\begin{equation}\label{20250804_14:34}
\bell! \geq 2^{\frac{d-k}2}.
\end{equation}
In fact,  if we remove the final block of ones in $\bell$, which in this case is of length at most $k-1$, we are left with $\ell_1 + \ldots + \ell_j = a \geq d-k+1$ elements, with $\ell_1 \geq \ldots \geq \ell_j \geq 2$. Using that  $(m+1)!(n-1)! \geq m!n!$ if $m \geq n\geq 1$, one can check that in this case the minimal value of $\ell_1!\ldots \ell_j!$  occurs when they all equal $2$ (or one of them equals $3$, if $a$ is odd). In all such cases, \eqref{20250804_14:34} holds. From \eqref{20250804_14:34} and  \eqref{20250527_22:01},
\begin{equation}\label{20250804_14:42}
 \frac{1}{d^d} \sum_{\bell \in I_{d,k}^\complement} \binom{d}{\bell}^2 |W_{\bell}| \leq     \frac{1}{d^d} \frac{d!}{2^{(d-k)/2}}\sum_{\bell \in I_{d,k}^\complement} \binom{d}{\bell} |W_{\bell}| \leq \frac{1}{d^d} \frac{d!}{2^{(d-k)/2}}\sum_{\bell \in \P_d} \binom{d}{\bell} |W_{\bell}| = \frac{d!}{2^{(d-k)/2}}.
\end{equation}
Using \eqref{20250802_10:21}, \eqref{20250804_14:41} and \eqref{20250804_14:42} we arrive at 
\begin{equation}\label{20250804_15:06}
    \frac{{\bf A}}{d!} \leq \frac{d!}{d^d}\cdot d^{d-k}\cdot |\P_{d-k}| + \frac{1}{2^{(d-k)/2}}.
\end{equation}

\smallskip

We now choose \footnote{Strictly speaking, we should consider  the integer part of this number, but this is again harmless for the asymptotic argument.} $k  = d - \frac{d}{2\log d}$.  Stirling's formula 
together with \eqref{20250805_11:29} in \eqref{20250804_15:06} yields 
\begin{align*}
  \frac{{\bf A}}{d!}& \lesssim \frac{\sqrt{2\pi d} }{e^{d}} \cdot d^{\frac{d}{2\log d}} \cdot  \frac{e^{\pi \sqrt{d/(3(\log d))}}}{ \frac{2d}{(\log d)}\sqrt{3}} + \frac{1}{2^{\frac{d}{4\log d}}}\lesssim  \frac{1}{2^{\frac{d}{4\log d}}}.
\end{align*}

\smallskip

Recall condition \eqref{20250527_22:36}. Given $a \geq 1$, the partition $\bell = (a,1,1,\ldots,1,0,0,\ldots,0)$ belongs to $N$ whenever 
$$a! \lesssim 2^{\frac{d}{4\log d}}.
$$
By Stirling's formula, this is guaranteed for $a \sim \frac{d}{8(\log d)^2}$ and $d$ large. Then, any partition of the form $(\ell_1, \ell_2, \ldots, \ell_j, 1,1,\ldots 1, 0, \ldots, 0)$, where $\ell_i \geq 1$ and $\sum_{i=1}^j \ell_i = a$, belongs to $N$ as well. There are as many such partitions as \[|\P_a| \sim \frac{2(\log d)^2}{d\sqrt{3}}e^{\pi \sqrt{d/(12(\log d)^2)}}.\] It follows that  the size of the power set of $N$, which corresponds to the number of conditions to be verified in \eqref{20250529_21:12}, grows at least as fast as 
\[2^{\frac{2(\log d)^2}{d\sqrt{3}}e^{\pi \sqrt{d/(12(\log d)^2)}}}, \text{ as } d \to \infty.\]

\subsection{Additional variables} For $k \in \N$ we might consider a curve $\Gamma \subset \F_q^{d+k}$ of the form $\Gamma = \{(\xi, \xi^2, \ldots, \xi^d, \phi(\xi)) : \xi \in \F_q\}$, where $\phi : \F_q \to \F_q^k$ is an arbitrary function, and address the sharp inequality
\begin{equation*}
\|(f \sigma)^{\vee}\|_{L^{2d}(\F_q^{d+k}, \d \x)} \leq {\bf C}_{d,q}^{(k)} \,\|f\|_{L^{2}(\Gamma, \sigma)}.
\end{equation*}
For $\bzeta = (\zeta_1, \ldots, \zeta_{d+k}) \in \F_q^{d+k}$, in analogy to \eqref{E : Phi = zeta}, one starts by understanding the structure of the solutions of the system
\begin{align*}
\begin{split}
t_1 + t_2 + \ldots + t_d &= \zeta_1\\
t_1^2 + t_2^2 + \ldots + t_d^2 &= \zeta_2\\
\vdots  \ \ \ \ \ \  \ & \\
t_1^d + t_2^d + \ldots + t_d^d &= \zeta_d\\
\sum_{i=1}^d \phi(t_i) & = (\zeta_{d+1}, \ldots, \zeta_{d+k}).
\end{split}
\end{align*}
Note that the set $\{t_1, t_2, \ldots, t_d\}$ is completely determined from the sub-vector $(\zeta_1, \zeta_2, \ldots, \zeta_d)$, and this in turn uniquely determines the values of $\zeta_{d+1}, \ldots, \zeta_{d+k}$. The analysis of the sharp inequality is then exactly the same as done in Section \ref{Proofs} and we conclude that the optimal constant ${\bf C}_{d,q}^{(k)}$ coincides with  the optimal constant of \eqref{20250526_09:29}.

\subsection{Related curves} Our strategy allows us to treat the same problem associated to other curves $\eta:\F_q \to \F_q^d$, provided that in the sum
$$
\eta(t_1)+\cdots+\eta(t_d) = \bzeta,
$$
the multiset $\{t_1,\ldots,t_d\}$ is uniquely determined by $\bzeta$.  For example, let $A$ be an invertible $d \times d$ matrix. With $\gamma (\xi) = (\xi, \xi^2, \ldots, \xi^d)$, we can consider
$$
\eta(\xi) := \gamma(\xi) \cdot A
$$
for which the optimal constant is again equal to the optimal constant of \eqref{20250526_09:29}. 

\smallskip

The analogous sharp form of~\eqref{20250526_09:29} (e.g. \cite[Theorem 1.1]{Koh2012}) for a general polynomial is an interesting question when the components of the associated polynomial map $\Phi : \F_q^d \to \F_q^d$ given by
$$
\Phi (t_1, \ldots, t_d) := \gamma (t_1) + \ldots + \gamma (t_d)
$$
do not uniquely identify $t_i$ up to permutations. For example, associated with the polynomial $\gamma (\xi) = (\xi, \xi^3, \xi^5)$, the following system of equations
\begin{align*}
\begin{split}
t_1 + t_2 + t_3 &= 1\\
t_1^3 + t_2^3 + t_3^3 &= 1\\
t_1^5 + t_2^5 + t_3^5 &= 1\\
\end{split}
\end{align*}
contains $(1, t_2, - t_2)$ within the solution set. Moreover, in the continuous setting, the dichotomy between existence of maximizers and concentration of maximizing sequences at (e.g.) $t=\pm 1$ has not been resolved \cite[Theorem 1.3]{BS25}. Thus  the methods of the present paper cannot be directly adapted.

\section*{Acknowledgements}
This project was initiated at a SQuaRE at the American Institute of Mathematics. The authors thank AIM for providing a supportive and mathematically rich environment.
Oliveira e Silva  is partially supported by FCT/Portugal through project UIDB/04459/2020 with DOI identifier 10-54499/UIDP/04459/2020.  Stovall is partially supported by NSF DMS-2246906, and Tautges is partially supported by NSF DMS-2037851.

\appendix
\section{Computational part}\label{appendix}
In this appendix, we describe the algorithm used to verify the validity of \eqref{20250529_21:12} for dimensions $2 \leq d \leq 20$ and exponents $q< \frac{d(d-1)}{2 \log 6} + \frac{(2d-1)}{3}$. 
Computations were performed in Python 3 with SageMath libraries.

\smallskip

For every admissible $d$ (line 8), we use Sage to generate the partition poset with the dominance order (line 10) and compute $\bell!$ for all $\bell \in \P_d$  (lines 11--13).
For every admissible $q$ (line 14), we compute the value of $\omega_\bell$ for all $\bell \in \P_d$ (lines 16--24).
For each fixed pair $(d, q)$, we iterate over subsets $I \subset N$ (recall that $N$ was defined in \eqref{eq_defNP}) and verify that
\[
    \sum_{\bell' \geq \bell \text{ for some } \bell \in I} \omega_{\bell'} \geq 0
\]
(lines 25--32).
This is equivalent to \eqref{20250529_21:12} and, since the check succeeds,  Conjecture~\ref{Conj1} holds for every dimension $d$ and exponent $q$ in the claimed range.
This concludes the proof of Theorem \ref{Thm1}.

\begin{lstlisting}[language=Python]
from sage.all import *
def choose(x, l):
    return factorial(x)/product([ factorial(i) for i in l ])
def get_w(l, d, q):
    c = [ l.count(i) for i in range(d+1) ]
    c[0] = q - len(l)
    return choose(q, c)
for d in range(2, 20):
    broken = False
    P = posets.IntegerPartitionsDominanceOrder(d)
    chs = {}
    for p in P:
        chs[p] = choose(d, list(p))
    for q in range(d+1, floor(d*(d-1)/(2*log(6)) + (2*d - 1)/3) + 1):
        q = Integer(q)
        ws = omegas = {}
        num = denom = 0
        for p in P:
            ws[p] = get_w(list(p), d, q)
            num += chs[p]**2 * ws[p]
            denom += chs[p] * ws[p]
        C = num / denom
        for p in P:
            omegas[p] = chs[p] * ws[p] * (C - chs[p])
        LHS = [ p for p in P if omegas[p] < 0 ]
        for I in Subsets(LHS):
            if len(I) == 0:
                continue
            if sum([ omegas[p] for p in P.order_filter(I) ]) < 0:
                print("Proposition failed at " + str(p) + ", q = " + str(q))
                broken = True
                break      
    if not broken:
        print("Proposition true for d = " + str(d))
\end{lstlisting}

\end{document}